\documentclass[12pt, fleqn]{article}

\usepackage{setspace}
\usepackage[ruled,linesnumbered]{algorithm2e}
\onehalfspacing
\usepackage[margin=2.5cm]{geometry}
\pdfpagewidth21cm
\pdfpageheight28cm
\usepackage{amsthm, amssymb, amstext}
\usepackage[fleqn]{amsmath}
\usepackage{latexsym}
\usepackage[dvips]{graphicx}
\usepackage{relsize}
\usepackage{hyperref}
\usepackage{mathtools}
\usepackage{enumerate} 
\usepackage{tikz}
\usetikzlibrary{arrows,shapes,calc}
\usepackage[ruled]{algorithm2e}

\usepackage{todonotes}
\usepackage{comment}

\def\l{\lambda}
\def\A{\mathcal{A}}
\def\B{\mathcal{B}}
\def\C{\mathcal{C}}

\def\F{\mathcal{F}}

\def\S{\mathcal{S}}

\def\H{\mathcal{H}}
\def\G{\mathcal{G}}
\def\RW{A}

\def\RWp{\RW$^\prime$}
\def\NW{D$^\prime$}

\setlength{\marginparwidth}{.65in}
\let\oldmarginpar\marginpar
\renewcommand\marginpar[1]{\-\oldmarginpar[\raggedleft\footnotesize #1]%
{\raggedright\footnotesize #1}}

\newtheorem{theorem}{Theorem}[section]
\newtheorem{lemma}[theorem]{Lemma}

\newtheorem{corollary}[theorem]{Corollary}
\newtheorem{conjecture}{Conjecture}[section]

\newtheorem{problem}{Problem}

\theoremstyle{remark}
\newtheorem{claim}{Claim}

\theoremstyle{definition}
\newtheorem{definition}{Definition}[section]
\newtheorem*{remark*}{Remark}

\usepackage{amsthm}

\title{Enclosings of Decompositions of Complete Multigraphs in $2$-Edge-Connected $r$-Factorizations}

\author{
	John Asplund\thanks{Department of Technology and Mathematics,
Dalton State College,
Dalton, Georgia 30720, USA \newline email: \texttt{jasplund@daltonstate.edu}}, \quad
    Pierre Charbit\thanks{IRIF \& Universit\'e Paris Diderot, Inria Project-team GANG \newline email: \texttt{charbit@irif.fr}}, \quad
	Carl Feghali\thanks{University of Bergen, Bergen, Norway \newline email: \texttt{carl.feghali@uib.no} } 
}

\date{}

\begin{document}
\maketitle

\begin{abstract}
A decomposition of a multigraph $G$ is a partition of its edges into subgraphs $G(1), \ldots , G(k)$. It is called an $r$-factorization if every $G(i)$ is $r$-regular and spanning. If $G$ is a subgraph of $H$, a decomposition of $G$ is said to be enclosed in a decomposition of $H$ if, for every $1 \leq i \leq k$, $G(i)$ is a subgraph of $H(i)$.

Feghali and Johnson gave necessary and sufficient conditions for a given decomposition of $\lambda K_n$ to be enclosed in some $2$-edge-connected $r$-factorization of $\mu K_{m}$ for some range of values for the parameters $n$, $m$, $\lambda$, $\mu$, $r$: $r=2$, $\mu>\lambda$ and either $m \geq 2n-1$, or  $m=2n-2$ and $\mu = 2$ and $\lambda=1$,  or $n=3$ and $m=4$. 
We generalize their result to every $r \geq 2$ and $m \geq 2n  - 2$. We also give some sufficient conditions for enclosing a given decomposition of $\lambda K_n$ in some $2$-edge-connected $r$-factorization of $\mu K_{m}$ for every $r \geq 3$ and $m = (2 - C)n$, where $C$ is a constant that depends only on $r$, $\lambda$ and~$\mu$. 
\end{abstract}


\section{Introduction}

In this paper, graphs are undirected and may contain multiple edges but no loops. The set of vertices and edges of a graph $G$ are denoted by~$V(G)$ and~$E(G)$ respectively. A \emph{decomposition} of $G$ into $k$ colors is a collection $\G =  \{\G(1), \G(2), \dots, \G(k)\}$ of spanning subgraphs of $G$, called \emph{color classes}, whose edge sets form a partition of $E(G)$. Let us emphasize that we allow a color class in a decomposition to contain isolated vertices. 
A \emph{partial decomposition} of $G$ is a decomposition of some subgraph of~$G$. It is said to be \emph{strict} if it is a decomposition of some proper subgraph of $G$.  For a vertex $v \in V(G)$ and a color class $\G(i)$ of a decomposition $\G$ of $G$, let $d_{\G(i)}(v)$ 
denote the degree of $v$ in $\G(i)$.

There has been a large number of results for problems of the type
``\textit{Given a fixed graph $H$ and a positive integer $n$, find necessary and sufficient conditions for the complete graph $K_{n}$ to be decomposable into color classes each isomorphic to $H$.}"
 For example, the case where $H$ is the complete graph $K_{p}$, $p<n$ corresponds to the theory of Steiner systems and has been solved asymptotically by Wilson~\cite{wilson}. His result has been recently extended to hypergraphs by Keevash~\cite{keevash} in an astonishing proof that settled the long-standing existence conjecture for combinatorial designs. Other results are concerned with decompositions into color classes each isomorphic to a graph satisfying certain properties -- for example one could ask for decompositions into cycles~\cite{bryant:cycles}, paths~\cite{bryant:paths} or stars~\cite{cain:stars}; see~\cite{matthew1, kotzig}  for some further examples.

In this paper, we consider enclosing problems which can be seen as analogues of precoloring problems in the setting of graph decompositions. 

Throughout this section and the rest of the paper, let $\lambda$, $\mu$, $k$,  $r$, $m$, $n$, $p$ be positive integers such that 
\[
\mu \geq \lambda, \quad m \geq n, \quad r \geq 2 \ \text{ and } \;p =  r(2n-m)/2.
\]
Let $\lambda K_{n}$ denote the complete graph on $n$ vertices with multiplicity $\lambda$ (that is, every pair of vertices are joined by $\lambda$ parallel edges).  A decomposition $\G$ into $k$ colors of $\lambda K_n$  is said to be \emph{enclosed} in a decomposition $\H$ into $k$ colors of $\mu K_m$ if, for each $1 \leq i \leq k$, $\G(i)$ is a subgraph of~$\H(i)$. An enclosing problem is  to find conditions on $\G$ to admit an enclosing in an $\H$ of a certain type.

There are a number of enclosing results in the case where $\lambda < \mu$ and the target decomposition consists of color classes each isomorphic to a cycle of one prescribed length~\cite{ Asplund0, asplundone, Asplund2, c31, c32, c33, c42, c4} or each isomorphic to a cycle of one of a number of prescribed lengths~\cite{ceven, cvarying}. However, less is known in the case where $\lambda < \mu$ and the target decomposition consists of spanning subgraphs. The aim of this paper is to address the following enclosing problem in which the target decomposition consists of spanning regular graphs that are sufficiently connected. A decomposition in which each color class is an $r$-regular $t$-edge-connected spanning subgraph is known as a $t$-edge-connected $r$-factorization.

\begin{problem}\label{problem} For $t\geq 2$, find necessary and sufficient conditions for enclosing a given decomposition  of $\lambda K_{n}$ into $k$ colors in some $t$-edge-connected $r$-factorization of $\mu K_{m}$.
\end{problem}

It is not difficult to verify (see Corollary~\ref{lem:necessary}) that in the non-trivial case where $n<m$ or $\lambda<\mu$, a necessary condition on the decomposition of $\lambda K_n$ to answer Problem~\ref{problem} is the following property.
\begin{definition}\label{def:admissible}
A decomposition $\G$ into $k$ colors is \emph{$r$-admissible} if 
\begin{itemize}
 \item $d_{\G(i)}(v) \leq r$ for each $v \in V(K_n)$ and $1\leq i \leq k$,
\item If $C$ is a connected component of $\G(i)$, then $C$ contains a vertex of degree at most $r-2$ in $\G(i)$ or at least two vertices of degree at most $r-1$ in $\G(i)$.
 \item If $C$ is a connected component of $\G(i)$ and $e$ is a cutedge of $C$, then the components $C_1$ and $C_2$ of $C-e$ each contain a vertex of degree at most $r-1$ in $\G(i)$. 
\end{itemize}
\end{definition}

In 1984, Hilton~\cite{hilton1} settled Problem~\ref{problem} where $\lambda = \mu = 1$ and $t = r = 2$, which corresponds to the problem of enclosing a given decomposition of $K_n$ in some Hamiltonian decomposition of $K_m$. 
 Nash-Williams \cite{amalgamation3} proved more general results and gets as a corollary the answer to the case $\lambda=\mu=1$, $t=2$ and $r \geq 2$. We shall find it useful to state this result as formulated and proved by Rodger and Wantland~\cite{RW}. 

\begin{theorem}[\cite{amalgamation3}]\label{thm:rw1}
Suppose that $m>n$, $r\geq 2$, and $p=r(2n-m)/2$. Then a given decomposition $\G$ of $K_n$ into $k$ colors can be enclosed in some $2$-edge-connected $r$-factorization of $K_{m}$
 if and only if 
 \begin{enumerate}[{\normalfont ({\RW}1)}]
 \item $rk=   m-1$ and if $r$ is odd, then $k$ is odd, 
   \item $\G$ is $r$-admissible,
 \item $\min(\{|E(\G(i))|\,:\, 1\leq i\leq k\})\geq p$.
  \end{enumerate}
 \end{theorem}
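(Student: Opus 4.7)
For ({\RW}1), every vertex of $K_m$ has degree $m-1$ split as $r$ into each of the $k$ colors, so $rk = m-1$; any $r$-regular graph with $r$ odd has an even number of vertices, forcing $m$ even and hence $k$ odd. For ({\RW}2), let $C$ be a component of $\G(i)$ with vertex set $S$; since $\H(i)$ is $2$-edge-connected and $S$ spans a component of $\H(i)[V(K_n)]$, at least two edges of $\H(i)$ leave $S$ and must go to $V(K_m)\setminus V(K_n)$, each reducing by one the $\G(i)$-degree of its $S$-endpoint and giving the required degree pattern; the cutedge condition follows by the same argument applied on each side of the cutedge. For ({\RW}3), $|E(\G(i))| = (rn - e_i)/2$ where $e_i \leq r(m-n)$ is the number of $\H(i)$-edges meeting the outside, yielding $|E(\G(i))| \geq p$.

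\textbf{Sufficiency} uses the amalgamation--detachment technique of \cite{amalgamation3}. Introduce an amalgamation vertex $\alpha$ and form the multigraph $G^\star$ on $V(K_n)\cup\{\alpha\}$ obtained from $K_m$ by identifying the $m-n$ outside vertices into $\alpha$: keep all edges of $K_n$, add $m-n$ parallel edges from $\alpha$ to each $v\in V(K_n)$, and add $\binom{m-n}{2}$ loops at $\alpha$. Extend $\G$ to a $k$-color decomposition $\G^\star$ of $G^\star$ by, in color $i$, joining $\alpha$ to each $v$ with $r-d_{\G(i)}(v)$ parallel edges and assigning $|E(\G(i))|-p$ loops to $\alpha$ (nonnegative by ({\RW}3)). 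Using ({\RW}1), the per-color totals exactly exhaust the $n(m-n)$ edges and $\binom{m-n}{2}$ loops of $G^\star$ incident to $\alpha$, and one checks that $d_{\G^\star(i)}(v)=r$ for every $v\in V(K_n)$ and $d_{\G^\star(i)}(\alpha)=r(m-n)$.

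The allocation must further be chosen so that each $\G^\star(i)$ is connected with no cutedge incident to $\alpha$; this is precisely what ({\RW}2) secures, since for any component $C$ of $\G(i)$ the inequality $\sum_{v\in V(C)}(r-d_{\G(i)}(v))\geq 2$ forces $C$ to receive at least two $\alpha$-edges in color $i$, and the cutedge bullet of Definition \ref{def:admissible} handles cutedges of $\G(i)$ analogously on each side. Nash-Williams' detachment theorem \cite{amalgamation3} then splits $\alpha$ into $m-n$ new vertices, giving each degree $r$ in every color and preserving $2$-edge-connectedness, thereby producing the sought $r$-factorization of $K_m$ enclosing $\G$. The main obstacle is the joint combinatorial design of $\G^\star$: one must simultaneously satisfy the degree constraints at every $v \in V(K_n)$, the per-color loop counts, and the connectivity/no-cutedge-at-$\alpha$ requirements, and then verify that the detachment step indeed delivers $2$-edge-connectedness in every color under exactly the hypotheses ({\RW}1)--({\RW}3).
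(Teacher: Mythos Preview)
Your proposal is correct and follows the same route the paper uses: the paper does not reprove Theorem~\ref{thm:rw1} itself (it is quoted from \cite{amalgamation3}), but it proves the multigraph analogue Theorem~\ref{thm:rw} by exactly your construction---add a single amalgamation vertex $x_0$ with $g(x_0)=m-n$, give it $r-d_{\G(i)}(v)$ edges to each $v$ and $|E(\G(i))|-p$ loops in colour $i$, observe from $r$-admissibility that each colour class is $2$-edge-connected, and then invoke Nash-Williams' fair-detachment theorem (Theorem~\ref{thm:triad}). Two small points worth tightening: the ``allocation'' involves no choice (the edge and loop multiplicities are forced by the formulae), and you should note that $|E(\G(i))|-p$ is an \emph{integer} thanks to ({\RW}1), not just nonnegative.
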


An investigation of Problem~\ref{problem} in the case where $\lambda<\mu$ and $t = r = 2$ was recently instigated by Feghali and Johnson~\cite{feghali}. They obtained necessary and sufficient conditions for enclosing a given decomposition of $\lambda K_n$ in some Hamiltonian decomposition of $\mu K_{m}$ in several cases: 
\begin{itemize}
\item $m \geq 2n-1$ for any $\mu>\lambda$,
\item $m=4$ and $n=3$ for any $\mu>\lambda$, and
\item $m=2n-2$ for $(\lambda,\mu)=(1,2)$.
\end{itemize}

As one might expect, Problem~\ref{problem} becomes more difficult as $m$ gets smaller or $r$ larger. In this paper, we address Problem~\ref{problem} by first generalizing their result to every~$r \geq 2$ in the following two theorems. The proofs of these theorems  rely on the method of proof in~\cite[Theorem 1.1]{feghali}.  
For a positive integer $i$ and a decomposition $\A$ of a graph $G$, let $\S_i(\A)$ denote the set of color classes of $\A$ that contain \emph{exactly} $i$ edges of $G$, and, for $u,v \in V(G)$, let $\S_i(u,v,\A)$ denote the set of color classes of $\A$ that consist of exactly $i$ edges, all of them between $u$ and $v$.

\begin{theorem}\label{thm:rfactor1}
Suppose that $\mu > \lambda$, $m \geq 2n-1$, $p =  r(2n-m)/2$, and $r\geq 2$. Then a given decomposition $\G$ of $\lambda K_n$ into $k$ colors can be enclosed in some $2$-edge-connected $r$-factorization of $\mu K_m$ if and only if
\begin{enumerate}[{\normalfont({B}1)}]
\item $rk = \mu(m-1)$ and $rm$ is even,
\item $\G$ is $r$-admissible, and
\item $\mathlarger{\sum}_{i=0}^p (p - i)|\S_i(\G)| \leq (\mu - \lambda) \displaystyle{\frac{n(n-1)}{2}}$.
\end{enumerate} 
\end{theorem}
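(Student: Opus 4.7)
The plan is to verify necessity by a direct counting argument and to prove sufficiency by a three-stage construction modeled on~\cite[Theorem 1.1]{feghali}. For necessity, let $\H$ be a $2$-edge-connected $r$-factorization of $\mu K_m$ enclosing $\G$. Condition (B1) is immediate: each $\H(i)$ is $r$-regular on $m$ vertices (forcing $rm$ even), and summing degrees across colors gives $rk=\mu(m-1)$. Condition (B2) follows from Corollary~\ref{lem:necessary}. For (B3), write $e_i=|E(\G(i))|$ and let $x_i$ be the number of edges of $\H(i)$ with both endpoints in $V(\G)$. A short degree calculation shows that the number of edges of $\H(i)$ inside $V(\H)\setminus V(\G)$ equals $x_i-p$, which must be non-negative; hence $x_i\geq p$. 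Since also $x_i\geq e_i$ and $\sum_i(x_i-e_i)=(\mu-\lambda)\binom{n}{2}$, this yields
\[
(\mu-\lambda)\binom{n}{2}\;\geq\;\sum_{i=1}^k\max\bigl(0,\,p-e_i\bigr)\;=\;\sum_{j=0}^{p}(p-j)\,|\S_j(\G)|,
\]
which is exactly (B3).

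For sufficiency, let $U$ be a set of $m-n$ new vertices (note $|U|\geq n-1$ since $m\geq 2n-1$), and build $\H$ in three stages. \emph{Stage 1} distributes the extra $(\mu-\lambda)\binom{n}{2}$ edges on $V(\G)$ among the $k$ color classes, adding $f_i$ edges to class $i$, so that $e_i+f_i\geq p$, no vertex of $V(\G)$ exceeds degree $r$ in any color, and the enlarged decomposition of $\mu K_n$ remains $r$-admissible. \emph{Stage 2} decomposes a suitable bipartite multigraph on $V(\G)\cup U$ (of multiplicity at most $\mu$) into $k$ colors so that in color $i$ each $v\in V(\G)$ has bipartite degree $r-d_i'(v)$, where $d_i'(v)$ is the degree of $v$ in color $i$ after Stage 1; this is a bipartite edge-coloring problem solvable by K\"onig-type tools. \emph{Stage 3} fills in the remaining edges on $U$ to produce the inside-$U$ part of each color class, each of size $h_i:=r(m-2n)/2+e_i+f_i$, so that $\H(i)$ is $r$-regular and, together with the Stage~2 edges, $2$-edge-connected on $V(\H)$; this amounts to an enclosing-type problem on $\mu K_{m-n}$ with prescribed degree boundary data, which I handle by techniques analogous to Theorem~\ref{thm:rw1}.

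The main obstacle is Stage 1. Simultaneously meeting the lower bound $e_i+f_i\geq p$, the per-vertex degree cap $r$ on $V(\G)$, and the delicate $r$-admissibility condition (component-level constraints on vertices of degree at most $r-2$ or $r-1$, including across every cut-edge) is subtle: a naive greedy distribution can saturate a component at degree $r-1$ and destroy admissibility, which would block Stage 3 from producing a $2$-edge-connected class. The key technical lemma will be a redistribution argument extending the one used in~\cite{feghali} for $r=2$ to general $r\geq 2$, showing that (B3) combined with (B1), (B2) and $m\geq 2n-1$ always yields an admissible distribution. Once Stage 1 is in place, Stages 2 and 3 follow largely mechanically from bipartite edge coloring and (an adaptation of) Theorem~\ref{thm:rw1}.
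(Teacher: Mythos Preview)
Your necessity argument agrees with the paper's (Corollary~\ref{lem:necessary}), just spelled out more fully.

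For sufficiency, the paper takes a different and shorter path. It first proves a multigraph analogue of Theorem~\ref{thm:rw1} (Theorem~\ref{thm:rw}) via Nash-Williams' fair-detachment theorem: any $r$-admissible decomposition of $\mu K_n$ into $k$ classes, each with at least $p$ edges, extends directly to a $2$-edge-connected $r$-factorization of $\mu K_m$. With this in hand, the entire proof of Theorem~\ref{thm:rfactor1} reduces to your Stage~1, and the paper dispatches that with two short lemmas: since $m\geq 2n-1$ forces $p\leq r/2$, topping up deficient classes is trivial (Lemma~\ref{lem:extendible11}), and a one-line degree count shows that any uncolored edge of $\mu K_n\setminus\lambda K_n$ always admits an admissible color (Lemma~\ref{lem:2m-1}). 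No redistribution argument is needed; the greedy coloring never gets stuck.

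So you have the difficulty inverted. Stage~1 is not the main obstacle for $m\geq 2n-1$; it is essentially immediate. The real work in your plan sits in Stages~2 and~3, and there is a gap there as written: once Stage~2 fixes a bipartite decomposition by K\"onig-type arguments, the degree profile on $U$ in each color is determined, and there is no guarantee that Stage~3 can then complete each class to a $2$-edge-connected $r$-factor --- this requires coordination between the bipartite edges and the edges inside $U$ that a sequential construction does not automatically provide. The detachment theorem handles this coordination by building all the new edges simultaneously; your ``adaptation of Theorem~\ref{thm:rw1}'' would in effect have to reprove Theorem~\ref{thm:rw}, and the paper's proof of that theorem does not factor into a bipartite step followed by an inside-$U$ step.
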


With the additional (and possibly unneeded) assumption $\mu\leq 2r-2$, we can extend Theorem~\ref{thm:rfactor1} to $m=2n-2$. 

\begin{theorem}\label{thm:rfactor2}
Suppose that $2(r-1) \geq \mu > \lambda$, $m = 2n-2$, and $r\geq 2$.  Then a given decomposition $\G$ of $\lambda K_n$ into $k$ colors can be enclosed in some $2$-edge-connected $r$-factorization of $\mu K_m$ if and only~if
\begin{enumerate}[{\normalfont({C}1)}]
\item $rk = \mu(m-1)$ and $rm$ is even,
\item $\G$ is $r$-admissible,
\item $\mathlarger{\sum}_{i=0}^r (r - i)|\S_i(\G)| \leq (\mu - \lambda) \displaystyle{\frac{n(n-1)}{2}}$, and
\item  for each  $u, v \in V(K_n)$, \\ $\displaystyle{|\S_0(\G)| + \sum_{i=1}^{r-1}|\S_i(u, v; \G)| \leq (\mu - \lambda) \left(\frac{n(n-1)}{2} - 1\right)}.$
\end{enumerate}
\end{theorem}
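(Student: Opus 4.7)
The plan is to follow the amalgamation--detachment technique used in the Rodger--Wantland proof of Theorem~\ref{thm:rw1}, adapted as in~\cite{feghali} to accommodate $\lambda<\mu$ and pushed down to the boundary case $m=2n-2$ where the combinatorial slack becomes tight. For necessity, (C1) is a routine degree count and (C2) is Corollary~\ref{lem:necessary}. For (C3), let $\H$ be a hypothetical enclosing, write $e_i=|E(\G(i))|$, let $a_i$ denote the number of $\H(i)$-edges inside $V(G)$ but not in $\G(i)$, and let $b_i$ denote the number of $\H(i)$-edges between $V(G)$ and $V(\H)\setminus V(G)$. A degree count at $V(G)$ in $\H(i)$ gives $2(e_i+a_i)+b_i=rn$, and $b_i\leq r(n-2)$ since each of the $n-2$ new vertices has degree $r$; hence $a_i\geq r-e_i$ whenever $e_i\leq r$, and summing while using $\sum_i a_i=(\mu-\lambda)n(n-1)/2$ yields (C3). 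Condition (C4) is obtained by a finer version of the same count in which, for each fixed pair $\{u,v\}$, one argues that every color class counted on the left-hand side forces at least one of the $(\mu-\lambda)(n(n-1)/2-1)$ extra edges to lie \emph{off} the pair $\{u,v\}$.

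For sufficiency, assume $\G$ satisfies (C1)--(C4). The proof proceeds in three phases. Phase~1: extend $\G$ to a decomposition $\G'$ of $\mu K_n$ by distributing the $(\mu-\lambda)K_n$ edges across the $k$ color classes so that in each color $i$ the degree $d_{\G'(i)}(v)$ lies in a prescribed window at every $v\in V(K_n)$, no color class is left with fewer than $r$ edges, and the per-pair multiplicities are controlled. Phase~2: form an amalgamated multigraph $\F$ on vertex set $V(K_n)\cup\{\alpha\}$ by adding, in each color $i$, exactly $r-d_{\G'(i)}(v)$ edges between $v$ and $\alpha$ together with the unique number of loops at $\alpha$ needed to make the color-$i$ degree of $\alpha$ equal $r(n-2)$ (loops counted twice), chosen so that each $\F(i)$ is connected. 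Phase~3: apply the Nash--Williams detachment theorem in the form used in~\cite{amalgamation3,RW} to split $\alpha$ into $n-2$ new vertices, each of degree $r$ in each color, producing the desired $2$-edge-connected $r$-factorization of $\mu K_m$ enclosing $\G$.

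The main obstacle is Phase~1. With $m=2n-2$ every new vertex of $\H$ contributes exactly $r$ edges split between internal edges of $V(\H)\setminus V(G)$ and edges to $V(G)$, so the loops at $\alpha$ are tightly constrained and each color class must absorb a precisely determined number of $V(G)$-internal extra edges. The auxiliary hypothesis $\mu\leq 2(r-1)$ is needed to keep the multiplicity of any pair within what a single $r$-regular color class in the amalgamation can accommodate. The heart of the argument is an edge-exchange / Hall-type lemma, modelled on the pair-by-pair balancing of~\cite[Theorem~1.1]{feghali}, that converts (C3) and (C4) into the feasibility of the Phase~1 distribution; once that is secured, Phases~2 and~3 go through along the lines of the proof of Theorem~\ref{thm:rw1}.
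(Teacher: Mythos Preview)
Your plan is essentially the paper's approach: necessity via the degree/edge counts of Corollary~\ref{lem:necessary}, sufficiency by first building an $r$-admissible decomposition of $\mu K_n$ with at least $p=r$ edges per class and then invoking the amalgamation--detachment machinery (your Phases~2--3 are precisely the proof of Theorem~\ref{thm:rw}).

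Two points to sharpen. First, your Phase~1 target is not just ``degrees in a prescribed window and $\ge r$ edges per class''; what Theorem~\ref{thm:rw} needs is full $r$-admissibility of $\G'$, including the component and cutedge conditions of Definition~\ref{def:admissible}, and this is exactly what makes each $\F(i)$ \emph{$2$-edge-connected} (not merely connected) in the amalgamation. The paper does not accomplish Phase~1 by a single Hall-type lemma; it proceeds in two genuinely different steps. Lemma~\ref{lem:rextendible} uses a bipartite matching (Hall) argument---with ``special $uv$-vertices'' encoding condition~(C4)---to add just enough edges of $\mu K_n\setminus\lambda K_n$ so that every class reaches $r$ edges not all on one pair, while remaining $r$-admissible. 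Then Lemma~\ref{lem:2m-2} extends this strict partial decomposition one uncolored edge at a time, with a recoloring move when the greedy choice is blocked, until all of $\mu K_n$ is colored.

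Second, the hypothesis $\mu\le 2(r-1)$ is not used to control pair multiplicities in the amalgamation. It enters only in Lemma~\ref{lem:2m-2}: when an uncolored $xy$-edge $e$ cannot be assigned any color, a counting argument forces $|E^i_{xy}|\ge r$ for all colors except one color $j$ for which $E^j_{xy}$ consists of $r-1$ parallel $xy$-edges; the bound $\mu\le 2(r-1)$ (together with $e$ being uncolored) is precisely what guarantees at most one such exceptional color exists, enabling the recoloring of an $xu$-edge $f$ from its color $c$ to $j$ and the assignment of $c$ to $e$.
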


We are also able to show that under the extra assumption that  $\G$ is $(r-1)$-admissible, then $\G$ can always be enclosed in some $2$-edge-connected $r$-factorization of $\mu K_m$ for significantly more values of $m$, $\mu$, and $\lambda$ than those covered by Theorems~\ref{thm:rfactor1} and~\ref{thm:rfactor2}.

\begin{theorem}\label{thm:new}
Suppose that $rk = \mu(m-1)$, $rm$ even, $r\geq 3$ and $2 \mu > r(\mu - \lambda)$. Let $\G$ be an $(r-1)$-admissible decomposition of $\lambda K_n$ into $k$ colors. Then there exists a constant $C = C(\mu, \lambda, r)$ such that if $m \geq (2 - C)n+1$, then $\G$ can be enclosed in some $2$-edge-connected $r$-factorization of $\mu K_m$.
\end{theorem}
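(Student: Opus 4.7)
The plan is as follows. When $m \geq 2n - 1$ the conclusion is immediate from Theorem~\ref{thm:rfactor1}: the $(r-1)$-admissibility of $\G$ directly yields $r$-admissibility (condition (B2)), and since every colour class has at most $(r-1)n/2$ edges while $p = r(2n-m)/2 \leq r/2$, condition (B3) is also trivially satisfied with ample slack. So the nontrivial regime is $(2-C)n + 1 \leq m \leq 2n - 2$, where $p > r/2$ and a direct construction becomes necessary.

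In this regime I would build the factorization $\H$ directly rather than attempt a clean reduction. For each colour $i$ I would complete $\G(i)$ to an $r$-regular spanning subgraph $\H(i)$ of $\mu K_m$ by adding edges from three sources: extra copies of old-vertex edges drawn from $(\mu - \lambda)K_n$, bipartite edges between the old vertex set $V(K_n)$ and the new vertex set $V' = V(K_m) \setminus V(K_n)$ of size $m - n$, and edges among vertices of $V'$. The $(r-1)$-admissibility of $\G$ provides, at each old vertex $v$ and each colour $i$, at least one free incidence with which to attach a bipartite edge; the inequality $2\mu > r(\mu - \lambda)$ guarantees that the total number of old-vertex extra edges to be distributed, namely $(\mu - \lambda)\binom{n}{2}$, is small enough relative to the $krn/2$ slots available at old vertices across colour classes that a suitable integral distribution exists. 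A concrete value for $C$ is obtained by requiring that the residual per-colour degree deficit $r - d_{\G(i)}(v)$, summed over all old vertices, leaves room for a Hamiltonian-like structure on $V'$ that secures $2$-edge-connectivity; this is feasible as long as $m - n$ is a sufficiently large fraction of $n$, where the fraction is controlled by the margin $2\mu - r(\mu - \lambda) > 0$ together with $r \geq 3$.

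The main obstacle will be the simultaneous satisfaction of $r$-regularity, $2$-edge-connectivity, and the integrality of the edge distribution across all $k$ colour classes. In particular, $2$-edge-connectivity forces every connected component of $\G(i)$ to be linked to $V'$ by at least two edges in $\H(i)$, and the cut-edge clause of Definition~\ref{def:admissible} must be propagated from $\G(i)$ to $\H(i)$ once the bipartite and new-vertex edges are added. Careful exploitation of the spare degree left by $(r-1)$-admissibility, which amounts to a uniform slack of at least one unit at every vertex in every colour class, is what enables this propagation; in the same spirit as in~\cite{feghali} and~\cite{RW}, this slack can be used to route a spanning path of $V'$ through each colour class, joined at its endpoints to the components of $\G(i)$. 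The hypothesis $2\mu > r(\mu - \lambda)$ then plays the role of a volume constraint: it bounds the excess edge mass that must be placed inside the old vertex set, ensuring that most of each colour class's edges can be routed through $V'$, where $2$-edge-connectivity is readily arranged via these spanning paths or cycles.
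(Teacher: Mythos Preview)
Your proposal is a plan rather than a proof: you name the obstacles but carry out no calculations, give no explicit $C$, and leave the simultaneous satisfaction of regularity, $2$-edge-connectivity and the edge-count constraints as ``the main obstacle'' to be resolved. More importantly, you attempt to build the $r$-factorization of $\mu K_m$ directly, splitting the work among old-vertex edges, bipartite edges to $V'$, and edges inside $V'$, and then arranging $2$-edge-connectivity by hand via spanning paths on $V'$. The paper bypasses all of this by reducing to $\mu K_n$: by Theorem~\ref{thm:rw} it suffices to enclose $\G$ in an $r$-admissible decomposition of $\mu K_n$ with at least $p$ edges per colour class, after which the amalgamation machinery handles the extension to $\mu K_m$ (including $2$-edge-connectivity) automatically.

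Within $\mu K_n$ the paper's argument is also different and much shorter than your outline. A result of Bryant (Lemma~\ref{lem:bryant}) provides an almost-regular decomposition $\F$ of $(\mu-\lambda)K_n$ into $k$ classes of size $\lfloor\frac{\mu-\lambda}{k}\binom{n}{2}\rfloor$ or $\lceil\frac{\mu-\lambda}{k}\binom{n}{2}\rceil$. Taking $C < 2 - r(\mu-\lambda)/\mu$ (positive precisely because $2\mu > r(\mu-\lambda)$) forces $k \geq (\mu-\lambda)n$, so each class of $\F$ has at most $(n-1)/2$ edges and hence $\F$ is a \emph{proper} edge colouring. A short lemma (Lemma~\ref{lem:pq}) then shows that superimposing any proper edge colouring on an $(r-1)$-admissible decomposition yields an $r$-admissible one; this is exactly where the $(r-1)$-admissibility hypothesis enters, and it replaces the component-by-component connectivity analysis you anticipate. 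Finally, requiring also $C \leq (\mu-\lambda)/(2\mu)$ gives $p \leq \frac{\mu-\lambda}{k}\binom{n}{2}$, so every colour class of $\G \cup \F$ has at least $p$ edges. The explicit constant is thus $C = \min\{(\mu-\lambda)/(2\mu),\, 2 - r(\mu-\lambda)/\mu\}$.
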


The paper is organized as follows. The necessity of (B1)--(B3) and (C1)--(C4) will be established in the next section (see Corollary~\ref{lem:necessary}),
in which we also prove a multigraph version of Theorem \ref{thm:rw1} that will be of use. In the last three sections, we prove Theorems~\ref{thm:rfactor1}, \ref{thm:rfactor2} and~\ref{thm:new}.

\section{Amalgamations and Detachments}\label{section:preliminaries}
We shall use the same strategy to prove each of the three main theorems of this paper. 
In vague terms that are made more precise at the end of this section,  we will first enclose the given decomposition $\G$ of $\lambda K_n$ in a suitable decomposition $\G'$ of   $\mu K_{n}$. Secondly and lastly, we will enclose $\G'$ in some $2$-edge-connected $r$-factorization of $\mu K_m$. This second step will be done through a multigraph analogue of Theorem~\ref{thm:rw1}, that we prove below (it will also yield as a corollary the necessary conditions of our first two theorems).

 \begin{theorem}\label{thm:rw}

Let $r\geq 2$ and $p =  r(2n-m)/2$. A given decomposition $\A$ of $\mu K_n$ into $k$ colors can be enclosed in some $2$-edge-connected $r$-factorization of $\mu K_{m}$
 if and only if 
 \begin{enumerate}[{\normalfont (\RWp1)}]
 \item $rk=\mu (m -1)$ and $rm$ is even,
  \item $\A$ is $r$-admissible, and
 \item $\min(\{|E(\A(i))|\,:\, 1\leq i\leq k\}\geq p$.
  \end{enumerate}
 \end{theorem}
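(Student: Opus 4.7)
My proposed proof has two parts: necessity of (\RWp1)--(\RWp3), and sufficiency via an amalgamation-detachment argument in the spirit of Rodger and Wantland~\cite{RW}.

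For necessity, suppose $\A$ is enclosed in a $2$-edge-connected $r$-factorization $\H$ of $\mu K_m$. Condition (\RWp1) follows by edge counting: $\mu m(m-1) = krm$ gives $rk=\mu(m-1)$, and $rm$ must be even since each $\H(i)$ is $r$-regular on $m$ vertices. For (\RWp2), every $\A(i)$ is a subgraph of the $r$-regular $2$-edge-connected graph $\H(i)$, so certainly $d_{\A(i)}(v)\leq r$, and a short case analysis on a component $C$ of $\A(i)$ (using how $C$ can be extended inside $\H(i)$, and how $2$-edge-connectedness of $\H(i)$ forces vertices of degree less than $r$ on both sides of any cutedge of $C$) yields the remaining two bullets. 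For (\RWp3), since $\A$ decomposes the induced copy of $\mu K_n$ in $\mu K_m$, the edges of $\A(i)$ are precisely the edges of $\H(i)$ with both endpoints in $V(\mu K_n)$; the $m-n$ outside vertices have total degree at most $r(m-n)$ in $\H(i)$, so the number of edges of $\H(i)$ among them is at most $r(m-n)/2$, giving $|E(\A(i))|\geq (rn-r(m-n))/2=p$.

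For sufficiency, the plan is to amalgamate. I would introduce a single new vertex $\alpha$ and extend $\A$ to a decomposition $\A^{*}$ of the multigraph $G^{*}$ on $V(\mu K_n)\cup\{\alpha\}$ consisting of $\mu K_n$ together with $\mu(m-n)$ parallel edges from $\alpha$ to each $v_j\in V(\mu K_n)$ and $\mu\binom{m-n}{2}$ loops at $\alpha$. In each color class $\A^{*}(i)$, I would place exactly $r-d_{\A(i)}(v_j)$ of the $\alpha v_j$-edges, and $|E(\A(i))|-p$ loops at $\alpha$. A direct calculation (using $rk=\mu(m-1)$) shows: (\RWp3) guarantees the loop count is nonnegative; the per-class loop counts sum to $\mu\binom{m-n}{2}$; the per-class $\alpha v_j$-edge counts sum to $\mu(m-n)$; and in $\A^{*}(i)$, every $v_j$ has degree exactly $r$ while $\alpha$ has degree exactly $r(m-n)$, counting each loop as $2$.

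With $\A^{*}$ in hand, the remaining step is to \emph{detach} $\alpha$ into $m-n$ new vertices $v_{n+1},\dots,v_m$, each of degree $r$ in every color class and each joined to every other vertex by exactly $\mu$ edges in total, so that the underlying graph becomes $\mu K_m$. This is carried out by invoking a detachment theorem of Nash-Williams/Bahmanian-Rodger type: the degree and multiplicity hypotheses are exactly what the construction of $\A^{*}$ enforces, and the $r$-admissibility of $\A$ translates into the structural requirement that ensures each detached color class is $2$-edge-connected. The main obstacle is this last point: verifying that $2$-edge-connectedness is preserved through detachment, which is where (\RWp2) does its real work by excluding the configurations that could create a bridge in the detached color class. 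Everything else reduces to degree and edge-count bookkeeping using (\RWp1) and (\RWp3).
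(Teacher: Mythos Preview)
Your proposal is correct and follows essentially the same approach as the paper. The only difference is one of precision: the paper casts the amalgamated object as a \emph{good triad} and applies Nash-Williams' Theorem~9 from~\cite{amalgamation3}, so (\RWp2) is used to show that each amalgamated color class $\A^{*}(i)$ is already $2$-edge-connected (making the triad good), after which $2$-edge-connectedness is preserved automatically by the detachment theorem---rather than being something to verify at the detachment step as you frame it.
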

 
Theorem~\ref{thm:rw} will be proved using the technique of \emph{amalgamations}. In particular, it will follow from Theorem 9 in \cite{amalgamation3}.  (In fact, we will use a simplified version of this theorem because we don't need the full generality of the argument in~\cite{amalgamation3}.) In order to state this theorem, we  must introduce the terminology in~\cite{amalgamation3}. 

 In the remainder of this section, we allow our graphs to have loops. For vertices $x$ and $y$ in some graph $H$, we define $d_{H}(x,y)$ (or simply $d(x,y)$ if no confusion is possible) to be the number of edges between $x$ and $y$ in $H$ if $x\neq y$ and the number of loops incident with $x$ if $x=y$. When counting the degree of $x$, the loops count twice. That is, $d(x)=\sum_{y\neq x}d(x,y)+2d(x,x)$.

If $F$ and $G$ are graphs, $\phi$ is a surjection from $V(F)$ onto $V(G)$, $\psi$ a bijection between $E(F)$ and $E(G)$,  such that $e \in E(F)$ joins $x$ and $y$ if and only if $\psi(e) \in E(H)$ joins $\phi(x)$ and $\phi(y)$, we say that $G$ is an {\em amalgamation} of $F$,  $F$ is a {\em detachment} of $G$, and that the functions $\phi$ and $\psi$ are \emph{amalgamation functions}. Informally speaking, each vertex $v$ of $G$ is obtained by identifying all vertices in $F$ which belong to the set $\phi^{-1}(v)$.

 A {\em triad} is a triple $(G,g,\G)$, where $G$ is a graph, $g$ is a function from $V(G)$ into $\mathbb{N}\setminus\{0\}$, such that no vertex $v$ with $g(v)=1$ is incident with a loop and $\G$ is a decomposition of $G$. 
For vertices $v,w\in V(G)$, we define $g(v,w)$ to be $g(v)g(w)$ if $v\neq w$ and ${g(v) \choose 2}$ if $v=w$ (with the interpretation of ${1 \choose 2}$ as $0$ whenever it occurs).

A triad will be called:
\begin{itemize}
\item \emph{expandable} if there exists a vertex $v$ such that $g(v)\geq 2$,
\item \emph{fully expanded} if $g(v) = 1$ for every vertex $v$, and
\item \emph{good} if, for every $i$, $\G({i})$ is 2-connected and $d_{\G({i})}(v)\geq 2 g(v)$ for every vertex $v$.
\end{itemize}
For real numbers $a$ and $b$, $b \approx a$ means $\lfloor a \rfloor \leq b \leq \lceil a \rceil$. Note that $\approx$ is not symmetric. A triad $(F,f,\F=\{\F({1}),\ldots,\F({k})\})$ will be called a {\em fair detachment} of a triad $(G,g,\G=\{\G({1}),\ldots,\G({k})\})$ if $F$ is a detachment of $G$ with amalgamation functions $\phi$ from $V(F)$ onto $V(G)$ and $\psi$ between $E(F)$ and $E(G)$ satisfying
 \begin{enumerate}[{\normalfont ({\NW}1)}]
\item $e\in \F({i})$ if and only if $\psi(e)\in \G({i})$,
\item $g(v)=\sum_{x\in \phi^{-1}(v)} f(x)$ for every vertex $v\in V(G)$,
\item $\displaystyle{\frac{d_{\F({i})}(x)}{f(x)}\approx \frac{d_{\G({i})}(\phi(x))}{g(\phi(x))}}$ for every vertex $x\in V(F)$ and every $i\in\{1,\ldots,k\}$,
\item $\displaystyle{\frac{d_{F}(x,y)}{f(x,y)}\approx \frac{d_{G}(\phi(x),\phi(y))}{g(\phi(x),\phi(y)))}}$ for every pair of distinct vertices $x,y\in V(F)$.
\end{enumerate}

We are now able to state Theorem 9 from \cite{amalgamation3}.

\begin{theorem}[\cite{amalgamation3}]\label{thm:triad}
Every good triad has a fully expanded good fair detachment.
\end{theorem}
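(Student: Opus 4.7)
The plan is to prove Theorem~\ref{thm:triad} by induction on the quantity $N(G,g) := \sum_{v \in V(G)} (g(v)-1)$, which measures how far the triad is from being fully expanded. When $N(G,g)=0$ the triad is already fully expanded, so it is its own fully expanded good fair detachment. For the inductive step, choose any vertex $v$ with $g(v)\geq 2$ and aim to produce an intermediate triad $(G^{*},g^{*},\G^{*})$ in which $v$ has been split into two vertices $v'$ and $v''$ with $g^{*}(v')=g(v)-1$, $g^{*}(v'')=1$, and $g^{*}$ equal to $g$ elsewhere. If this intermediate triad is a good fair detachment of $(G,g,\G)$, then applying the induction hypothesis to $(G^{*},g^{*},\G^{*})$ (which has $N$-value one smaller) produces a fully expanded good fair detachment of $(G^{*},g^{*},\G^{*})$, and composing detachments (which preserves fairness because the approximate conditions ({\NW}3) and ({\NW}4) behave well under iteration of floors and ceilings) yields the desired fully expanded good fair detachment of the original triad.

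The heart of the argument is therefore the single-vertex split. Each edge of $G$ incident with $v$ must be reassigned: an edge between $v$ and some $w\neq v$ becomes either a $v'w$-edge or a $v''w$-edge, while a loop at $v$ becomes a loop at $v'$, a loop at $v''$, or a $v'v''$-edge. Because $g^{*}(v'')=1$, conditions ({\NW}3) and ({\NW}4) reduce to near-integer quotas at $v''$: for each neighbor $w$ the number of $v''w$-edges must be $\approx d_{G}(v,w)/g(v,w)$, and within each color class $\G(i)$ the number of edges incident with $v''$ must be $\approx d_{\G(i)}(v)/g(v)$. These per-color and per-endpoint quotas are jointly realizable by a straightforward Hall-type (equivalently, integer-flow) argument on the bipartite auxiliary graph whose parts are the edges at $v$ and the available quota slots at $v''$; integrality follows because each quota is a rounding of the same underlying rational and the total number of edges assigned to $v''$ agrees on both sides.

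The main obstacle is preserving goodness, specifically the $2$-edge-connectivity of each color class $\G^{*}(i)$. The minimum-degree requirement $d_{\G^{*}(i)}(v'')\geq 2$ follows from the quota together with the hypothesis $d_{\G(i)}(v)\geq 2g(v)$, and similarly at $v'$. For $2$-edge-connectivity, begin with any quota-respecting distribution and argue that if some $\G^{*}(i)$ has a cutedge, one can perform a local edge swap at $v'$ or $v''$ (exchanging endpoints of two edges of the same color, or exchanging colors of two edges with the same endpoints) which destroys the cut without violating any quota; this uses the slack provided by $d_{\G(i)}(v)\geq 2g(v)$ and the fact that $\G(i)$ is $2$-connected in the amalgamated graph, so there is always an alternative edge at $v$ that can be re-routed through the other of $\{v',v''\}$. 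A potential-function argument, counting for instance the total number of cutedges across all color classes, shows this swapping process terminates.

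Assembling the pieces: the single-vertex split produces a good fair detachment $(G^{*},g^{*},\G^{*})$ with $N(G^{*},g^{*})=N(G,g)-1$, and iterated application of the inductive hypothesis completes the proof. The decisive technical point, and what I expect to be the main difficulty in a careful write-up, is the swap lemma that guarantees $2$-edge-connectivity of each $\G^{*}(i)$ at every split; everything else amounts to bookkeeping with floors, ceilings, and the compositional behavior of amalgamation functions.
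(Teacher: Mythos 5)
First, note that the paper does not prove Theorem~\ref{thm:triad} at all: it is imported verbatim as Theorem~9 of Nash-Williams's paper \cite{amalgamation3}, so there is no in-paper proof to compare against. Your proposal is an attempt to reprove that external result. Its overall shape --- induction on $\sum_v (g(v)-1)$, splitting one vertex $v$ into $v'$ and $v''$ with $g^{*}(v'')=1$, meeting the ({\NW}3)/({\NW}4) quotas by a Hall-type or matrix-rounding argument, and composing detachments --- is indeed the standard architecture for detachment theorems in the literature, and the bookkeeping parts (joint realizability of the row/column quotas, stability of the $\approx$ conditions under iterated splits, the degree bounds $d_{\G^{*}(i)}(v'),d_{\G^{*}(i)}(v'')\geq 2g^{*}(\cdot)$ coming from $d_{\G(i)}(v)\geq 2g(v)$) can all be made to work.

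The genuine gap is exactly where you flag it, and flagging it does not close it: the preservation of $2$-edge-connectivity of every color class $\G^{*}(i)$ after the split. You assert that any cutedge created by a quota-respecting distribution can be removed by a local swap ``without violating any quota'' and that the total number of cutedges over all color classes serves as a decreasing potential. Neither claim is justified, and both are where the real difficulty of Nash-Williams's theorem lives: a swap of endpoints between $v'$ and $v''$ within one color can disconnect or create new cutedges elsewhere in that same color class (the edge you re-route may itself have been a bridge between other parts of $\G^{*}(i)$), a swap of colors between two edges perturbs a second color class and can increase its cutedge count, and nothing in the sketch rules out cycling. The hypothesis that $\G(i)$ is $2$-edge-connected in the amalgamated graph and that $d_{\G(i)}(v)\geq 2g(v)$ is what ultimately makes the conclusion true, but converting that into a terminating local-search argument requires a careful global invariant (Nash-Williams's own proof analyses edge cuts of the partially detached graph directly rather than repairing them a posteriori). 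As written, the proposal reduces the theorem to an unproved ``swap lemma'' that is essentially equivalent in difficulty to the theorem itself.
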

We are now ready to prove Theorem \ref{thm:rw}.
\begin{proof}[ Proof of Theorem \ref{thm:rw}]
\textit{Necessity:}  Let $\A$ be a decomposition of $\mu K_{n}$ that can be enclosed in some $2$-edge connected $r$-factorization $\A'$ of $\mu K_{m}$. We prove (\RWp 1). In the target decomposition, every color class is spanning and $r$-regular. Thus, by considering the degrees at a single vertex, we obtain $rk=\mu(m-1)$. Moreover, the sum of degrees in any color class is even, so $rm$ must be even. Thus (\RWp 1) holds. 
 
 We prove (\RWp 2).   The degree condition of $r$-admissibility is obvious as $\A$ is enclosed in an $r$-factorization. To establish the other two admissibility conditions, we consider a component $C$ of some color class of $\A$. Let $F$ be the $2$-edge-connected $r$-factor of $\A'$ that contains $C$. Then ~$|V(F)| > |V(C)|$ since $m > n$. Thus, $C$ cannot only have vertices of degree $r$. Also it is not the case that only one edge can be added between $C$ and $F - C$ since this would create a bridge. This implies the second condition of $r$-admissibility. The third one follows by a similar observation. Thus (\RWp 2) holds.  
  
To prove (\RWp 3), we consider the edges of some color class $\A'(i)$ that are in common with the edges between $V(\mu K_{n})$ and $V(\mu K_{m})\setminus V(\mu K_{n})$. Their number is at most $r(m-n)$ and at least $\sum_{v \in K_n} d_{\A'(i)}(v) - d_{\A(i)}(v)$.  Then, since $d_{\A'(i)}(v) = r$ for each $v \in V(\mu K_m)$, we find that $r(m - n) \geq rn-\sum_{v\in K_{n}} d_{\A(i)}(v)=rn-2|E(\A(i))|$, which implies (\RWp 3).

\textit{Sufficiency:} Let $\A$ be a decomposition of $\mu K_{n}$ satisfying conditions (\RWp 1), (\RWp 2), and (\RWp 3). Let $x_1,x_{2},\ldots, x_{n}$ be the vertices of $ \mu K_{n}$,  let $x_0$ be a new vertex and define a triad $(G,g,\G)$ by:
\begin{itemize}
\item $V(G)=\{x_{0},x_{1},\ldots,x_{n}\}$,
\item $g(x_{i})=1$ for every $i>0$ and $g(x_{0})=m-n$,
\item the edges of $G$ not incident with $x_{0}$ are the same as in $\mu K_n$ and of the same color as in $\mu K_{n}$,
\item for every color $i$, there are $|E(\A({i}))|-p$ loops of color $i$ on $x_{0}$ (this number is an integer thanks to (\RWp  1)), and
\item for every color $i$ and every $j>0$, there are $r-d_{\A({i})}(x_{j})$ edges of color $i$ between $x_{0}$ and $x_{j}$.
\end{itemize}
From this (and using $rk=\mu(m-1)$ from (\RWp 1) for the last two items) one can verify that 
\begin{itemize}
\item for every color $i$ and every $j>0$, $d_{\G({i})}(x_{j})=r$,
\item for every color $i$, $d_{\G({i})}(x_{0})=r(m-n)$,
\item $d_{G}(x_{0},x_{j})=\mu(m-n)$ for every $j>0$, and
\item $d_{G}(x_{0},x_{0})=\frac{1}{2}\mu(m-n)(m-n-1)$.
\end{itemize}
Moreover, it is easy to observe from  (\RWp 2) that every $\G({i})$ is a $2$-edge-connected spanning subgraph of $G$. Hence $(G,g,\G)$ is a good triad and we can apply Theorem \ref{thm:triad} to get a fully expanded good fair detachment $(F,f,\F)$ of $(G,g,\G)$. By definition we deduce the following:
$f(v) = 1$ for each $v \in V(F)$. That is, by (\NW2), $x_0$ has been replaced by $m - n$ vertices in $F$ that we denote $x_{n+1},\dots,x_{m}$. By (\NW1), $\G$ is precisely the restriction of $\F$ to $\{x_{1},\ldots,x_{n}\}$. Every color class of $\F$ is $2$-edge-connected and spanning (since the detachment is good). By (\NW3), $d_{\F({i})}(x_{j})=r$ for every $x_j \in V(F)$.    By (\NW4), $d_{\F}(x,y)=\mu$ for any two distinct vertices $x, y \in V(F)$.
In other words, we have exactly shown the existence of the desired enclosing, which proves our theorem.
\end{proof}

\begin{corollary}\label{lem:necessary}
The conditions in Theorems~{\normalfont\ref{thm:rfactor1}} and {\normalfont\ref{thm:rfactor2}} are necessary. 
\end{corollary}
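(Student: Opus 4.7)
The plan is to reduce everything to Theorem~\ref{thm:rw} by introducing the intermediate decomposition $\G' = \{\H(i)[V(K_n)] : 1 \leq i \leq k\}$ of $\mu K_n$, where $\H$ is the assumed $2$-edge-connected $r$-factorization of $\mu K_m$ enclosing $\G$. Since $\G(i) \subseteq \G'(i) \subseteq \H(i)$ for every $i$, Theorem~\ref{thm:rw} applied to $\G'$ yields (\RWp 1), (\RWp 2), (\RWp 3); in particular (\RWp 1) is exactly (B1) = (C1).

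For (B2) = (C2), I would argue the $r$-admissibility of $\G$ directly rather than attempt to inherit it from that of $\G'$, since a component of $\G(i)$ is typically only a piece of the corresponding component of $\G'(i)$. The degree bound is immediate from $r$-regularity of $\H(i)$. For the other two items, let $C$ be a component of $\G(i)$ and observe that $V(C) \subseteq V(K_n) \subsetneq V(K_m)$; $2$-edge-connectedness of $\H(i)$ then forces at least two $\H(i)$-edges to leave $V(C)$, and summing the deficit $r - d_C(v)$ over $v \in V(C)$ produces either a vertex with $d_{\G(i)}(v) \leq r-2$ or two vertices with $d_{\G(i)}(v) \leq r-1$. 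The cut-edge condition follows from the same observation applied to $\H(i) - e$, which is still connected.

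Conditions (B3) and (C3) reduce to edge-counting: (\RWp 3) gives $|E(\G'(i))| \geq p$, so at least $\max(0, p - |E(\G(i))|)$ edges of $(\mu - \lambda)K_n$ must go into color $i$; since $(\mu - \lambda)K_n$ has $(\mu - \lambda)\binom{n}{2}$ edges in total, summing over $i$ and reorganizing by $|E(\G(i))|$ gives the inequality. (C3) is simply this with $p = r$, the value forced by $m = 2n-2$.

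The genuinely new ingredient is (C4), and this is the step I expect to be the main obstacle. The key structural observation is that a color class $\G'(i)$ whose edges all lie between a single pair $\{u, v\}$ would force, by (\RWp 3) and the $r$-regularity of $\H(i)$, exactly $r$ parallel edges between $u$ and $v$ in $\H(i)$ and no other $\H(i)$-edge incident with $u$ or $v$; this isolates $\{u, v\}$ from the (non-empty) rest of $V(\H(i))$, contradicting $2$-edge-connectedness. Consequently, for every pair $\{u, v\}$ and every color class in $\S_0(\G) \cup \bigcup_{j=1}^{r-1}\S_j(u, v; \G)$, at least one edge not between $u$ and $v$ must be added from $(\mu - \lambda)K_n$ when passing from $\G(i)$ to $\G'(i)$; since these edges are distinct across colors and chosen from the $(\mu - \lambda)(\binom{n}{2} - 1)$ edges of $(\mu - \lambda)K_n$ not joining $u$ and $v$, (C4) follows.
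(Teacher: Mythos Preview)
Your proof is correct and follows the same strategy as the paper: restrict $\H$ to $\mu K_n$, apply Theorem~\ref{thm:rw} to the resulting decomposition, and count edges of $(\mu-\lambda)K_n$. The one minor difference is that for (B2)/(C2) the paper simply asserts that $r$-admissibility of the intermediate decomposition is inherited by its subgraph $\G$ (this is correct, though your caution is justified---it is not entirely immediate), whereas you argue directly from $\H$; similarly for (C4) you invoke $2$-edge-connectedness of $\H(i)$ where the paper invokes (\RWp 2), but these amount to the same thing.
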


\begin{proof}
Note that conditions (C1), (C2), and (C3) are the same as (B1), (B2), and (B3). 

Suppose that a decomposition $\G$ of $\lambda K_n$ into $k$ colors can be enclosed in some $2$-edge-connected $r$-factorization $\F$ of $\mu K_m$, and let $\A$ denote the restriction of $\F$ to $\mu K_n$.  
By Theorem~\ref{thm:rw}, $\A$ must satisfy conditions (\RWp 1)-(\RWp3). Note that (\RWp1) is exactly (C1) (or (B1)). Since $\G$ is a subgraph of $\A$ and, by (A$^\prime$2),  $\A$ is $r$-admissible; $\G$ is also $r$-admissible. Thus (C2) (or (B2)) holds. By (A$^\prime$3), each color class of $\A$ contains at least $p$ edges, so we must add, from the edges of $\mu K_n \setminus \lambda K_n$ and for each $ i = 0, \dots, p$, at least $p - i$ edges to each color class of $\S_i(\G)$. Thus (C3) (or (B3)) holds.  

 We are left to prove (C4). So we assume that $m=2n-2$ and consider a color $c\in \{1,\ldots,r\}$ that belongs to $\S_{0}(\G)$ or $\S_{i}(x,y,\G)$ for some $i<r$. This means that, in $\G$, there are exactly $i$ edges with color $c$ all of which are $xy$-edges. On the other hand,  by (\RWp3), $\A$ contains  at least $r$ edges with color $c$, and, by (\RWp2), these edges cannot all be $xy$-edges as this would contradict the second condition of $r$-admissibility. 
So we must add to each color class in $\S_0(\G)$ or $\S(x,y,\G)$ at least one edge with color $c$ that is not an $xy$-edge, which implies (C4).
\end{proof}

Given that the necessity conditions are proved, Theorems \ref{thm:rfactor1} and \ref{thm:rfactor2} will follow if we are able to enclose the decomposition $\G$ of $\lambda K_{n}$ in a decomposition of $\mu K_n$ that meets the necessary and sufficient conditions in Theorem~\ref{thm:rw}. To do that in the next sections, we will proceed each time in two steps: 
\begin{itemize}
\item first, we extend $\G$ by coloring some edges of $\mu K_{n}\setminus \lambda K_{n}$ so that every color class in the resulting decomposition $\G'$ contains $p$ edges and $\G'$ is still $r$-admissible. That is, $\G'$ is a partial decomposition of $\mu K_{n}$ that encloses $\G$ and satisfies (A$^\prime$1), (A$^\prime$2) and (A$^\prime$3).
\item second, we show how to extend $\G'$ (one edge at a time, with sometimes the possibility of recoloring edges in $\G'$ that are not in $\G$) to get a full decomposition of $\mu K_{n}$ that satisfies (A$^\prime$1), (A$^\prime$2) and (A$^\prime$3).
\end{itemize}
In light of the first step, we introduce the following definition.   An $r$-admissible decomposition $\G$ of $\lambda K_n$ is \emph{$p$-extendible with respect to $\mu K_n$} if there exists an $r$-admissible partial decomposition $\G'$ of $\mu K_n$ such that\begin{itemize}
 \item the restriction of $\G'$ to $\lambda K_n$ is $\G$, and
 \item  every color class of $\G'$ contains at least $p$ edges.
 \end{itemize}

\section{Proof of Theorem~\ref{thm:rfactor1}} 

 In this section, we prove Theorem~\ref{thm:rfactor1}.  We require the following two lemmas. 
 
 \begin{lemma}\label{lem:extendible11}
Suppose that $m \geq 2n - 1$, $r\geq 2$, and $p =  r(2n-m)/2$,  and let $\G$ be an $r$-admissible decomposition of $\lambda K_n$. If {\normalfont(B3)} holds, then $\G$ is $p$-extendible with respect to $\mu K_n$.
\end{lemma}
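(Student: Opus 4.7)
The plan is to handle two cases based on $m$, the main one being $m = 2n-1$.

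If $m \geq 2n$, then $p = r(2n-m)/2 \leq 0$, so every color class of $\G$ trivially contains at least $p$ edges, and we may take $\G' := \G$ (viewed as a partial decomposition of $\mu K_n$). Thus assume throughout the rest of the proof that $m = 2n-1$, so $p = r/2$, which we regard as a positive integer (consistent with $rm$ being even in the surrounding theorem, forcing $r$ even).

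The crux of the proof is the following observation, which I would isolate explicitly: \emph{any partial decomposition of $\mu K_n$ in which each color class contains at most $r/2$ edges is automatically $r$-admissible}. Indeed, each vertex in such a color class has degree at most $r/2$; for $r \geq 4$ this is at most $r-2$, so every component contains a vertex of degree $\leq r-2$ (the second admissibility condition), and every vertex has degree $\leq r-1$, so the cutedge condition is also immediate. The remaining case $r=2$ (and $p=1$) is checked by direct inspection: the only component shapes are isolated vertices and single edges, both of which satisfy all three admissibility conditions.

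Given this observation, the extension is built greedily. Starting from $\G'=\G$ (viewed in $\mu K_n$), while some color class $c$ still contains fewer than $p$ edges, select any pair $\{u,v\}$ whose remaining capacity at pair $\{u,v\}$ (that is, $\mu-\lambda$ minus the number of $uv$-edges already placed by previous greedy steps) is positive, and place one such copy of $uv$ into $\G'(c)$. This keeps $\G'$ a partial decomposition of $\mu K_n$ extending $\G$; the color classes we modify end with exactly $p \leq r/2$ edges and so are $r$-admissible by the crux observation, while the color classes left untouched (those with $|E(\G(c))| \geq p$) inherit admissibility from $\G$. A pair with positive remaining capacity always exists while demand remains, since the total capacity is $(\mu-\lambda)\binom{n}{2}$, the total number of edges ever added is $\sum_{i=0}^{p-1}(p-i)|\S_i(\G)| = \sum_{i=0}^p (p-i)|\S_i(\G)|$ (the $i=p$ term vanishes), and (B3) bounds the latter by the former.

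The main obstacle I anticipate is the admissibility observation itself; once one notices that $p \leq r/2$ makes admissibility essentially free for every new color class, the remainder reduces to a one-line counting argument using (B3), and the per-step existence of an available pair follows directly.
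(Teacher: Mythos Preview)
Your proposal is correct and follows essentially the same approach as the paper: reduce to $m=2n-1$ (so $p=r/2$), then use (B3) to guarantee enough edges in $\mu K_n\setminus\lambda K_n$ to arbitrarily top up each color class of $\S_i(\G)$ with $p-i$ edges. The paper's proof is in fact terser than yours---it simply asserts the result is $r$-admissible---whereas your ``crux observation'' (that a color class with at most $r/2$ edges is automatically $r$-admissible) makes explicit the justification the paper leaves implicit.
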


 \begin{proof}
We only need to consider the case $(m, p) = (2n - 1, \frac{r}{2})$ because $p \leq 0$ and the result is trivial when $m \geq 2n$.  Suppose that (B3) holds.
Then we can arbitrarily add $p - i$ edges to each color class of $\S_i(\G)$ for $0 \leq i \leq p$ and obtain a decomposition that will be $r$-admissible.
 \end{proof}
 
  \begin{lemma}\label{lem:2m-1}
Suppose that $m \geq 2n-1$, $\mu > \lambda$, $rk = \mu(m-1)$, and $r\geq 2$. Let $\G$ be a partial $r$-admissible decomposition of $\lambda K_n$ into $k$ colors. Suppose that $\G$ is enclosed in a strict partial $r$-admissible decomposition $\G'$ of $\mu K_n$ into $k$ colors. Then $\G$ can be enclosed in a partial $r$-admissible decomposition of $\mu K_n$ into $k$ colors whose color classes are the same size as those of $\G'$ except for one that contains one more edge.
   \end{lemma}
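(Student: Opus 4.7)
My approach is to pick an uncolored edge of $\mu K_n$ and add it to a carefully chosen color class of $\G'$, resorting to a single-edge swap in $\G' \setminus \G$ only if a direct placement fails.

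First, since $\G'$ is strict, some edge $e = uv \in E(\mu K_n)$ is absent from $\G'$. Writing $\alpha_w = \sum_{i=1}^k d_{\G'(i)}(w)$ for the total degree of $w$ in $\G'$, the fact that $e$ is uncolored forces $\alpha_u, \alpha_v \leq \mu(n-1) - 1$, so the number of colors $i$ with $d_{\G'(i)}(u) = r$ (respectively $d_{\G'(i)}(v) = r$) is at most $\alpha_u/r$ (respectively $\alpha_v/r$). Combined with $rk = \mu(m-1)$ and $m \geq 2n - 1$, at least
\[
k - \frac{2(\mu(n-1)-1)}{r} \geq \frac{2}{r} > 0
\]
colors $i$ satisfy $d_{\G'(i)}(u), d_{\G'(i)}(v) < r$; call these \emph{viable}. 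For any viable color, the degree part of $r$-admissibility is preserved upon adding $e$.

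Next, I would verify that adding $e$ to some viable color $i$ preserves conditions (2) and (3) of Definition~\ref{def:admissible}, by case analysis on whether the components $C_u^i$ and $C_v^i$ of $\G'(i)$ containing $u$ and $v$ coincide. In the different-component case, $e$ becomes a cutedge, and the $r$-admissibility witnesses in $C_u^i$ and $C_v^i$ combine to supply low-degree vertices on each side of $e$ and inside the merged component. In the same-component case, $e$ only creates a cycle, so components and cutedges can only simplify, and the sole failure mode for condition (2) is when the two low-degree witnesses of $C_u^i = C_v^i$ are precisely $u$ and $v$, both of degree $r-1$ in $\G'(i)$. If a direct placement works for some viable $i$, I set $\G''(i) = \G'(i) + e$ and the lemma is proved.

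In the remaining case, I would exploit $\mu > \lambda$ to find an edge $e' \in \G'(i) \setminus \G(i)$ incident to $u$ or $v$, uncolor it from color $i$, place $e$ in color $i$, and re-assign $e'$ to another color $j$ having adequate degree room at its endpoints (whose existence again follows from a counting argument analogous to the one above). The resulting $\G''$ encloses $\G$ (no edge of $\G$ is touched), is $r$-admissible, and has the same color-class sizes as $\G'$ except that color $j$ has one more edge. The main obstacle is the case analysis in the second step---verifying $r$-admissibility for the merged or modified component, especially condition (3) for a newly created cutedge---and, when direct placement fails, selecting $e'$ and $j$ so that no new admissibility violation is introduced by the swap; both tasks rely on using the slack in the counting at $m = 2n-1$ and on the hypothesis $\mu > \lambda$ to supply recolorable edges outside $\G$.
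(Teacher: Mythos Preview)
Your approach is more elaborate than necessary and contains gaps. The paper's proof avoids any swap: it observes that whenever assigning $e=xy$ the color $i$ fails $r$-admissibility, one must have $d_{\G'(i)}(x)+d_{\G'(i)}(y)\geq r$. This single inequality covers all four failure modes simultaneously (degree $r$ at $x$, degree $r$ at $y$, both at $r-1$ in the same component, or one at $r-2$ and the other at $r-1$ with all remaining vertices of that component at $r$). Summing over all $k$ colors yields $2\mu(n-1)-2\geq rk=\mu(m-1)\geq 2\mu(n-1)$, a contradiction; hence some color works outright.

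You miss this shortcut and also undercount the obstructions: your ``sole failure mode for condition (2)'' in the same-component case is not sole. The configuration $d_{\G'(i)}(u)=r-2$, $d_{\G'(i)}(v)=r-1$ with every other vertex of the component at degree $r$ (case (iv) above) is viable in your sense yet still fails condition~(2) after adding $e$, since the new component ends up with exactly one vertex of degree $r-1$ and none of degree $\leq r-2$. Because your case analysis is incomplete, the swap step would have to absorb this too, but that step is not justified either: you assert the existence of $e'\in\G'(i)\setminus\G(i)$ incident to $u$ or $v$ without argument (nothing prevents all edges of $\G'(i)$ at $u,v$ from lying in $\G$), and the claim that $e'$ can be placed in some color $j$ while preserving $r$-admissibility repeats the very difficulty you are trying to resolve. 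The hypothesis $\mu>\lambda$ is in fact not needed for the argument, and the slack you invoke at $m=2n-1$ is precisely what makes the direct degree-sum count go through without any recoloring.
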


   \begin{proof}
   Let $e$ be an edge of $\mu K_n \setminus \lambda K_n$ that is not an edge of $\G'$, and let $x$ and $y$ be its incident vertices. To prove the lemma, it suffices to show that we can color $e$ in such a way that the resulting decomposition is $r$-admissible. 
   
 If assigning color $i$ to $e$ gives a decomposition that is not $r$-admissible, then, by considering Definition~\ref{def:admissible}, either
\begin{itemize}
\item $d_{\G'(i)}(x)=r$, or
\item  $d_{\G'(i)}(y)=r$, or
\item $x$ and $y$ belong to the same connected component of $\G'(i)$ and $d_{\G'(i)}(x)=d_{\G'(i)}(y)=r-1$, or
\item $x$ and $y$ belong to the same connected component $C$ of $\G'(i)$, $r - 2= d_{\G'(i)}(x) < d_{\G'(i)}(y) = r - 1$, and every other vertex in $C$ has degree exactly $r$ in $C$.
\end{itemize}
In all possible cases we find that $d_{\G'(i)}(x)+d_{\G'(i)}(y)\geq r$. By summing the degrees of $x$ and $y$ over all $k$ colors, we get a number that is at most the sum of degrees of $x$ and $y$ minus 2 as $e$ is not colored. So $2\mu(n-1)-2\geq rk=\mu(m-1)\geq \mu(2n-2)$ since $m \geq 2n-1$, which is a contradiction. Therefore we can always color an uncolored edge and obtain a decomposition that is $r$-admissible.
   \end{proof}
   
   We can now give the proof of Theorem~\ref{thm:rfactor1}, as outlined in~Section~\ref{section:preliminaries}.

   \begin{proof}[Proof of Theorem~\ref{thm:rfactor1}]
The necessity of (B1), (B2) and (B3) follows from Corollary~\ref{lem:necessary}. To prove sufficiency, it suffices to apply Lemma~\ref{lem:extendible11} and then iteratively apply Lemma~\ref{lem:2m-1} to get a decomposition of $\mu K_n$ that satisfies the conditions of  Theorem~\ref{thm:rw}, which gives us the desired enclosing in $\mu K_{m}$.
   \end{proof}
   
   \section{Proof of Theorem~\ref{thm:rfactor2}}

In this section, we prove Theorem~\ref{thm:rfactor2}. We require the following two lemmas. The first lemma is a generalization of \cite[Proposition 2.3]{feghali}. Its proof is similar to that of~\cite[Proposition 2.3]{feghali} but is considerably shorter. 
 
 \begin{lemma}\label{lem:rextendible}
Suppose that $\mu > \lambda$, $m = 2n - 2$, and $r\geq 2$, and let $\G$ be an $r$-admissible decomposition of $\lambda K_n$. If {\normalfont(C3)} and {\normalfont(C4)} hold, then $\G$ is $r$-extendible with respect to $\mu K_n$.
 \end{lemma}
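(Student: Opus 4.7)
Since $m=2n-2$ gives $p=r$, the task is to extend $\G$ by edges of $\mu K_n \setminus \lambda K_n$ to an $r$-admissible partial decomposition $\G'$ in which every color class contains at least $r$ edges. For each deficient class $c \in \S_i(\G)$ with $i<r$ one must assign exactly $r-i$ new edges, and by (C3) the total demand $\sum_{i=0}^{r-1}(r-i)|\S_i(\G)|$ does not exceed $M:=(\mu-\lambda)\binom{n}{2}$, the total supply.

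First I would pinpoint the sole admissibility obstruction caused by additions. Because additions only raise degrees and $\G$ is $r$-admissible, a short check against Definition~\ref{def:admissible} shows that the only structural failure that can appear is that some color class becomes a component consisting of exactly $r$ parallel edges between two vertices (violating the second clause); the degree cap and the cutedge condition are automatically preserved in every other configuration. A quick degree-sum argument confirms that a component on two vertices with $r$ parallel edges is in fact the unique configuration of a class with exactly $r$ edges that fails the second clause. Hence the task reduces to ensuring that the $r$ edges of each class span at least two distinct pairs of vertices. The at-risk classes are precisely
\[
\Omega := \S_0(\G) \cup \bigcup_{u,v}\bigcup_{i=1}^{r-1}\S_i(u,v;\G),
\]
i.e.\ the empty classes together with those whose existing edges all lie on a single pair.

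The plan is a two-step construction. In Step~1 I would use Hall's theorem to reserve a ``diversifying'' edge for each at-risk class: build a bipartite graph $B$ between $\Omega$ and $E := E(\mu K_n) \setminus E(\lambda K_n)$, joining each $c \in \S_i(u,v;\G)$ (with $i \geq 1$) to every edge of $E \setminus E_{uv}$ (where $E_{uv}$ is the set of parallel $uv$-edges in $E$, with $|E_{uv}| = \mu - \lambda$), and joining each $c \in \S_0(\G)$ to every edge of $E$; then extract a matching of $B$ saturating $\Omega$. In Step~2, fill the remaining slots of each deficient class arbitrarily from the unused edges of $E$; since each at-risk class now owns its diversifying edge and each non-at-risk deficient class already spans at least two pairs, admissibility is preserved throughout.

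The main obstacle is verifying Hall's condition in Step~1. For any $Q \subseteq \Omega$ that contains a class of $\S_0(\G)$, or that consists of classes drawn from at least two distinct pairs, one has $N(Q) = E$, and the bound
\[
|Q| \leq \sum_{i=0}^{r-1}|\S_i(\G)| \leq \sum_{i=0}^{r-1}(r-i)|\S_i(\G)| \leq M = |N(Q)|
\]
from (C3) is enough (using that $r-i \geq 1$ for $0 \leq i \leq r-1$). For $Q$ concentrated on a single pair $u_0v_0$ and disjoint from $\S_0(\G)$, $N(Q) = E \setminus E_{u_0v_0}$ has size $(\mu - \lambda)(\binom{n}{2} - 1)$, and (C4) directly furnishes the required $|Q| \leq (\mu - \lambda)(\binom{n}{2} - 1)$. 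The feasibility of Step~2 is then a routine transportation problem: the remaining total demand is at most $M - |\Omega|$ (by (C3)), the remaining supply is exactly $M - |\Omega|$, and degree caps at each vertex are respected automatically, since each class ends up with exactly $r$ edges and hence maximum vertex-degree at most $r$.
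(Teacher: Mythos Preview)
Your structural analysis is correct (for a class with exactly $r$ edges the only obstruction to $r$-admissibility is indeed having all $r$ edges parallel between one pair), and your Hall verification for Step~1 is fine. There is, however, a genuine gap in how you handle the classes of $\S_0(\G)$. You join each such class to \emph{every} edge of $E$, so its matched ``diversifier'' lands on some pair $(u,v)$ but diversifies nothing---the class was empty. In Step~2 you then add $r-1$ further edges ``arbitrarily'', and nothing in your argument prevents all of them from also being $(u,v)$-edges (this can certainly occur whenever $\mu-\lambda\ge r$, and the lemma places no upper bound on $\mu$). The resulting class would consist of $r$ parallel edges, violating admissibility. For classes in $\S_i(u,v;\G)$ with $i\ge1$ the diversifier is genuinely off $(u,v)$, so your argument works there; the flaw is specific to $\S_0$.

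The paper sidesteps this by first adding one arbitrary edge of $\mu K_n\setminus\lambda K_n$ to each class of $\S_0(\G)$, turning it into an $\S_1(u,v;\cdot)$ class, and only then building the bipartite graph---in which it encodes \emph{all} remaining demand at once: each bad class receives $r-i-1$ unrestricted vertices plus one special $uv$-vertex forbidden from $uv$-edges, and Hall is verified from (C3) when $N(S)=W$ and from (C4) otherwise. Your two-step scheme can be repaired in the same spirit: preprocess the $\S_0$ classes first, then run your Step~1 on the resulting decomposition (so every at-risk class now has a genuine pair to avoid), and finally fill arbitrarily. The Case~2 Hall bound must then also account for the $\S_0$-edges already spent on $(u,v)$, exactly as the paper does with its sets $\S_0^{(u,v)}$ and $\overline{\S_0^{(u,v)}}$.
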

 
 \begin{proof}
  Suppose that (C3) and (C4) hold. First, from the edges of $\mu K_n \setminus \lambda K_n$, we arbitrarily add  exactly one edge to each color class of $\S_0(\G)$, and let $\G'$ denote the resulting decomposition. A color class is said to be \emph{bad} if it belongs to $\S_i(u, v; \G')$ for some $1 \leq i \leq r - 1$ and $u, v \in V(K_n)$; otherwise it is said to be \emph{good}.   We construct an auxiliary bipartite graph $H$ with bipartition $\{V,W\}$ as follows:
 \begin{itemize}
 \item each vertex in $W$ represents an edge in $\mu K_n\setminus \l K_n$ that is not an edge of $\G'$,
 \item for each $1 \leq i \leq r - 1$ and each good color class in $\S_i(\G')$, add   $r - i$ vertices to $V$ that are each adjacent to every vertex in $W$, and
 \item For each $1 \leq i \leq r - 1$ and each bad color class in $\S_i(u, v; \G')$ for some $u, v \in V(K_n)$, add   $r - i - 1$ vertices to $V$ that are each adjacent to every vertex of $W$ and add one vertex to $V$ that we refer to as a \emph{special $uv$-vertex} that is adjacent to every vertex of $W$ that is not a $uv$-edge. 
 \end{itemize}  
  
  \begin{claim}\label{claim:1}
  $H$ has a matching of size $|V|$. 
  \end{claim}
  
  Before we prove the claim, we show that it implies the lemma. Let $M$ be a matching of size $|V|$. For each edge  in $ M$ that joins a vertex in $V$, corresponding to some color class $A$, to a vertex in $W$, corresponding to some edge $f$, we add $f$ to $A$.  Since $H$ has a matching of size $|V|$, it is not difficult to see that, in the resulting decomposition, a color class either is a color class of $\G$ and hence contains at least $r$ edges, or consists of exactly $r$ edges that do not all join the same pair of vertices due to our special vertices. Thus $\G$ is $r$-extendible with respect to $\mu K_n$. 
  
  Let us now prove the claim.  The claim will follow from Hall's Theorem if we can show that $|N(S)| \geq |S|$ for every subset $S \subseteq V$.   
  Fix $S \subseteq V$. We distinguish two cases. 
  
  \medskip
  
  \noindent
  \textbf{Case 1}: $N(S) = W$. Notice that $|\S_1(\G')| = |\S_0(\G)| + |\S_1(\G)|$ and $\S_i(\G) = \S_i(\G')$ for each $i \geq 2$.  Hence
  \begin{eqnarray*}
 |S| \leq |V| &=& \sum_{i = 0}^r (r-i)|\S_i(\G')| \\ &=& \sum_{i=1}^r (r-i)|\S_i(\G)| + (r-1)|\S_0(\G)| \\ &\leq&  (\mu-\lambda) \frac{n(n-1)}{2} - |\S_0(\G)| = |W| = |N(S)|,
  \end{eqnarray*}
  where the inequality follows from (C3). 

\medskip

\noindent
\textbf{Case 2}: $N(S) \subsetneq W$. This implies that $S$ consists only of special $uv$-vertices for some $u, v \in V(K_n)$. Let $\S_0^{(u, v)}$ denote the set of color classes of $\G'$ that were obtained from $\G$ by adding a $uv$-edge to a color class of $\S_0(\G)$, and let $\overline{\S_0^{(u, v)}}$ denote the set of color classes of $\G'$ that were obtained from $\G$ by adding an edge that is not a $uv$-edge to a color class of $\S_0(\G)$. Hence 
\begin{eqnarray*}
|S| &\leq&  \sum_{i=1}^{r-1}|\S_i(u, v; \G')| \\
&=&  |\S_0^{(u, v)}| +  \sum_{i=1}^{r-1}|\S_i(u, v; \G)| \\
&\leq& (\mu - \lambda) \left(\frac{n(n-1)}{2} - 1\right) - |\overline{\S_0^{(u, v)}}| 
= 
|N(S)|,
\end{eqnarray*}
where the second inequality follows from (C4) and the fact that  $|\S_0^{(u, v)}| + |\overline{\S_0^{(u, v)}}| = |\S_0(\G)|$. This completes the proof of the lemma. 
 \end{proof}
 
 \begin{lemma}\label{lem:2m-2}
Suppose that $m =  2n-2$, $r\geq 2$, $rk = \mu(m-1)$, and $2(r-1) \geq \mu > \lambda$. Let $\G$ be an $r$-admissible decomposition of $\lambda K_n$ into $k$ colors. Suppose that $\G$ is enclosed in a strict partial $r$-admissible decomposition $\G'$ of $\mu K_n$ into $k$ colors.  Then $\G$ can be enclosed in a partial $r$-admissible decomposition of $\mu K_n$ into $k$ colors whose color classes are the same size as those of $\G'$ except for one that contains one more edge.
\end{lemma}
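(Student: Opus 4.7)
The plan is to follow the scaffolding of Lemma~\ref{lem:2m-1}: pick an uncolored edge $e=xy$ of $\mu K_{n}\setminus E(\G')$ and try to add it to some color class $\G'(i)$, preserving $r$-admissibility. Exactly as enumerated in the proof of Lemma~\ref{lem:2m-1}, an obstruction to placing $e$ in $\G'(i)$ forces $d_{\G'(i)}(x) + d_{\G'(i)}(y) \geq r$. If some color $i$ is not obstructed, set $\G''(i) = \G'(i) + e$ and $\G''(j) = \G'(j)$ for $j \neq i$, and we are done.

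Suppose instead that every color is obstructed. Summing the inequality over all $i$ and using $rk = \mu(m-1) = \mu(2n-3)$ yields $d_{\G'}(x) + d_{\G'}(y) \geq \mu(2n-3)$, while $e$ being uncolored gives $d_{\G'}(x) + d_{\G'}(y) \leq 2\mu(n-1) - 2$. Combining these gives $\mu \geq 2$, which is consistent (in contrast to the situation $m \geq 2n-1$ of Lemma~\ref{lem:2m-1}), so I cannot finish by contradiction. Instead, I would locate a swap: find a color $a$, an edge $f \in \G'(a) \setminus E(\G)$, and a color $b \neq a$ such that
\begin{enumerate}
\item $\G'(a) - f + e$ is $r$-admissible, and
\item $\G'(b) + f$ is $r$-admissible.
\end{enumerate}
Setting $\G''(a) = \G'(a) - f + e$, $\G''(b) = \G'(b) + f$, and $\G''(c) = \G'(c)$ for $c \notin \{a,b\}$, gives a decomposition whose color classes match those of $\G'$ in size except that $|\G''(b)| = |\G'(b)| + 1$; since $f \notin E(\G)$, $\G$ remains enclosed in $\G''$.

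To guarantee that such a triple $(a,f,b)$ exists, I would refine the obstruction analysis. For each color $i$, either $d_{\G'(i)}(x) = r$ or $d_{\G'(i)}(y) = r$ (type~I), or $x,y$ lie in the same component of $\G'(i)$ in one of the two special configurations listed in Lemma~\ref{lem:2m-1} (type~II). Since $d_{\G'}(x) + d_{\G'}(y) \leq 2\mu(n-1)-2$, the gap between $\sum_i (d_{\G'(i)}(x) + d_{\G'(i)}(y))$ and its lower bound $rk$ is at most $\mu - 2$, so the hypothesis $\mu \leq 2(r-1)$ caps how many colors can realize a type-II obstruction, forcing most colors into type~I. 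In a type-I color $a$ with (say) $d_{\G'(a)}(x)=r$, averaging $d_{\G(a)}(x)$ against $d_\G(x) \leq \lambda(n-1) < \mu(n-1)$ shows that some edge $f$ incident to $x$ in $\G'(a)$ lies outside $\G$ and is available to move. The same global degree budget, together with $\mu \leq 2(r-1)$, yields a color $b$ whose degrees at the endpoints of $f$ are strictly less than $r$, giving room to accept $f$.

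The main obstacle will be ensuring $r$-admissibility after the swap — in particular, verifying the cutedge condition (third bullet of Definition~\ref{def:admissible}) at both $\G''(a)$ and $\G''(b)$. Removing $f$ from $\G'(a)$ can reshape or split the component containing $f$, and then inserting $e=xy$ may create new cutedges or merge components; symmetrically, inserting $f$ into $\G'(b)$ may create a new cutedge whose two sides need witness vertices of degree at most $r-1$. I expect these checks to split into subcases driven by the obstruction mode of color $a$ and by whether $f$ lies on a cycle of $\G'(a)$, using the $r$-admissibility of $\G'(a)$ and $\G'(b)$ together with the inequality $2(r-1) \geq \mu$ to supply the required witnesses.
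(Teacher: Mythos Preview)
Your outline has the right shape (try to color $e=xy$; if every color is obstructed, perform a swap), but the swap you propose is too loosely specified to close, and your degree-sum bookkeeping is not sharp enough to locate the right edge to move. The paper works instead with $E_{xy}^i$, the set of edges of $\G'(i)$ incident with $x$ or $y$. A refinement of the obstruction list (using the cutedge clause of Definition~\ref{def:admissible} to rule out the configuration of $r-2$ parallel $xy$-edges plus a single $yw$-edge) shows that each obstructed color satisfies either $|E_{xy}^i|\geq r$ or $E_{xy}^i$ consists of exactly $r-1$ parallel $xy$-edges. Since $\sum_i |E_{xy}^i|\le rk-1$ and since $\mu\le 2(r-1)$ together with $e$ being uncolored means at most one color can carry $r-1$ parallel $xy$-edges, there is \emph{exactly one} special color $j$ of the second kind, and $|E_{xy}^i|=r$ for every $i\neq j$.

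This structural fact is what drives the swap, and it is the missing idea in your plan. By $r$-admissibility, $\G'(j)$ has a second component $Q_2$ containing a vertex $u$ of small degree; the swap edge is then $f=xu$ taken from $\mu K_n\setminus\lambda K_n$. If $f$ is uncolored, give it color $j$ and stop. Otherwise $f$ has some color $c\neq j$; move $f$ from $c$ to $j$ and place $e$ in $c$. The choice of $u$ makes $\G'(j)+f$ visibly $r$-admissible, and the exact equality $|E_{xy}^c|=r$ (with $f\in E_{xy}^c$ incident to $x$ but not $y$, so $d_{\G'(c)}(y)\le r-1$) is precisely what lets you verify the component and cutedge conditions for $\G'(c)-f+e$. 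Your scheme of picking a generic type-I color $a$, an arbitrary $f$ incident to $x$, and then a receiver $b$ by degree budget does not give you this control over either the receiving color or the structure of $\G'(a)-f+e$; without identifying $j$ and $u$ as above I do not see how to complete it.
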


\begin{proof}
Let $e$ be an edge of $\mu K_n \setminus \lambda K_n$ that is not an edge of $\G'$, and let $x$ and $y$ be its incident vertices. To prove the lemma, it suffices to show that we can color $e$ in such a way that the resulting decomposition is $r$-admissible. 

Let $E_{xy}^i$ denote the set of edges with color $i$ that are incident with $x$ or $y$ in $\G'(i)$. If assigning color $i$ to $e$ gives a decomposition that is $r$-admissible, then we do so immediately and are done. If this is not the case, then, by considering Definition~\ref{def:admissible}, we have (as in Lemma \ref{lem:2m-1})

\begin{itemize}
\item[(i)]  $d_{\G'(i)}(x)=r$, or
\item[(ii)]   $d_{\G'(i)}(y)=r$, or
\item[(iii)] $x$ and $y$ belong to the same connected component of $\G'(i)$ and $d_{\G'(i)}(x)=d_{\G'(i)}(y)=r-1$, or
\item[(iv)]   $x$ and $y$ belong to the same connected component $C$ of $\G'(i)$, $r-2 = d_{\G'(i)}(x) < d_{\G'(i)}(y) = r - 1$, and every other vertex in $C$ has degree exactly $r$ in $C$. 
\end{itemize}

Suppose for a contradiction that $E_{xy}^i$ consists of $r - 2$ parallel $xy$-edges and a single edge from $y$ to some vertex $w$ distinct from $x$. If all the vertices distinct from $x$ and $y$ in the component $C$ of $\G'(i)$ containing $y$ have degree exactly $r$ in $\G'(i)$, then $yw$ is a cutedge of $C$ such that $C - yw$ contains a component whose vertices each have degree $r$ in $C$. This contradicts Definition \ref{def:admissible}. Combined with (i)--(iv), this implies that either
$|E_{xy}^i| \geq r$ or $E_{xy}^i$ consists of $r - 1$ parallel $xy$-edges. Moreover, because $\sum_{i=1}^k|E_{xy}^i|$ is at most the number of edges incident with $x$ or $y$ minus $1$ as $e$ is not colored,
\[\sum_{i=1}^k|E_{xy}^i| \leq 2\mu(n-2)+(\mu-1) = \mu(m - 1) - 1 = rk - 1,\]
and, because $2(r-1) \geq \mu$ and $e$ is not colored, at most one color class contains at least $r - 1$ parallel $xy$-edges. Combining these facts, there must exist some $j \in \{1, \dots, k\}$ such that $E_{xy}^j$ consists of $r - 1$ parallel $xy$-edges and $|E_{xy}^i| = r$ for each $i \in \{1, \dots, k\} \setminus \{j\}$. Moreover, since $\G'$ is $r$-admissible, $\G'(j)$ contains at least two components $Q_1$, $Q_2$ where $Q_1$ contains all of the edges in $E_{xy}^j$ and $Q_2$ contains a vertex $u$ of degree strictly less than $r-1$ or exactly two vertices $u$ and $v$ of degree exactly $r-1$ (possibly, $Q_2$ consists of a single vertex).  

Let $f$ be an $xu$-edge of $\mu K_n \setminus \lambda K_n$. If $f$ is not an edge of $\G'$, then we can clearly assign color $j$ to~$f$.  
So we can assume that $f$ has some color $c \not = j$. Let us argue that recoloring $f$ to $j$ and then coloring $e$ with $c$ gives us a decomposition $\G''$ that is $r$-admissible. 

Notice that the only cause of difficulty is verifying the conditions of $r$-admissibility in the resulting color class $\G''(c)$. Since $c\neq j$, $|E_{xy}^c|=r$ by the above discussion, and since $f$ is incident to $x$ but not $y$,  $d_{\G'(c)}(y)\leq r-1$. So in $\G''$ no vertex will have degree more than $r$ in color class $\G''(c)$ and hence $\G''(c)$ satisfies the first $r$-admissibility condition. 

To prove that $\G''(c)$ satisfies the other two conditions of $r$-admissibility, we must only show that the component of $\G''(c)$ containing $x$ and $y$ satisfies these conditions. Indeed, every other component of $\G''(c)$ is either a component of $\G'(c)$ and hence is $r$-admissible or it contains vertex $u$ in case $f$ was a cutedge of $\G'(c)$ and hence, using the fact that $u$ has degree at most $r-1$ in that component,  is readily seen to be $r$-admissible. So, from now on, we focus on the component of $\G''(c)$ containing $x$ and $y$.

Since $|E_{xy}^c|=r$, if $E_{xy}^c$ contains at most $r- 3$ parallel $xy$-edges, then $\G''$ clearly satisfies the second condition of $r$-admissibility since the sum of degrees in $\G''(c)$ of $x$ and $y$ will not exceed $2r-2$. It is also readily seen that the third condition of $r$-admissibility is satisfied in this case. Thus, since there are at most $r-2$ parallel edges of color $c$ between $x$ and $y$, the only case remaining is when $E^c_{xy}$ consists of exactly $r - 2$ parallel $xy$-edges, the edge $f$ and another edge $f'$ incident with $x$ or $y$ but not both. 

Irrespective of whether $f'$ is incident with $x$ or $y$, $f'$ is a cutedge in $\G''(c)$ separating a component containing $x$ and $y$ (where one has degree $r$ and the other $r-1$ in $\G''(c)$) from another component that we denote $C$. If $f'$ was already a cutedge in $\G'(c)$, then the degrees of vertices in $C$ are the same in $\G''(c)$ and $\G'(c)$ and hence, by the third condition of $r$-admissibility, $C$ contains a vertex of degree at most $r-1$ in $\G''(c)$. Since one of $x$ and $y$ has degree $r-1$ in $\G''(c)$,  it follows that $\G'(c)$ is $r$-admissible.  If, on the other hand, $f'$ was not a cutedge in $\G'(c)$,  then $u$ belongs to the component of $\G''(c)$ containing $x$ and $y$. So the second condition of $r$-admissibility is satisfied because the sum of degrees in each component did not change from $\G'(c)$ to $\G''(c)$. The third condition of $r$-admissibility is also satisfied because $u$ now has degree $r-1$ in $\G''(c)$. This completes the proof. 
\end{proof}

   \begin{proof}[Proof of Theorem~\ref{thm:rfactor2}]
   The proof is analogous to that of Theorem~\ref{thm:rfactor1} with an application of Lemmas~\ref{lem:rextendible} and~\ref{lem:2m-2} in place of Lemmas~\ref{lem:extendible11} and~\ref{lem:2m-1}. 
   \end{proof}

\section{Proof of Theorem~\ref{thm:new}}

 In this section, we prove Theorem~\ref{thm:new}. First, we require an easy observation.

 \begin{lemma}\label{lem:pq}
 Suppose that  $r \geq 3$, $r\geq 2$, and $\mu > \lambda$, and  let $\G$ be an $(r-1)$-admissible decomposition of $\lambda K_n$ into $k$ colors. If $\B$ is a proper $k$-edge-coloring of $\mu K_n \setminus \lambda K_n$,  then $ \G \cup \B$ is an $r$-admissible decomposition of~$\mu K_n$.   
 \end{lemma}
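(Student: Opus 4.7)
The plan is to verify, color by color, that $(\G\cup\B)(i) := \G(i)\cup\B(i)$ satisfies each of the three conditions of Definition~\ref{def:admissible} for parameter $r$. The single tool needed is that, since $\B$ is a proper edge coloring, each class $\B(i)$ is a matching, so $d_{\B(i)}(v)\leq 1$ for every vertex $v$. The first condition is then immediate: for every $v$ and $i$,
\[ d_{(\G\cup\B)(i)}(v) \;=\; d_{\G(i)}(v) + d_{\B(i)}(v) \;\leq\; (r-1)+1 \;=\; r. \]

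For the second condition, I would fix a connected component $C$ of $(\G\cup\B)(i)$ and observe that, because $\G(i)\subseteq (\G\cup\B)(i)$, the vertex set of $C$ decomposes as a disjoint union of vertex sets of some $\G(i)$-components $D_1,\ldots,D_s$. If $s\geq 2$, the $(r-1)$-admissibility of $\G(i)$ guarantees that each $D_j$ contains a vertex of degree at most $r-2$ in $\G(i)$, and any such vertex has degree at most $r-1$ in $(\G\cup\B)(i)$; so $C$ has at least two vertices of degree at most $r-1$. If $s=1$, then $C$ is just $D_1$ with possibly some matching edges added inside it; by $(r-1)$-admissibility, $D_1$ either has a vertex of degree at most $r-3$ in $\G(i)$, which becomes a vertex of degree at most $r-2$ in $(\G\cup\B)(i)$, or two vertices of degree at most $r-2$ in $\G(i)$, which become two vertices of degree at most $r-1$. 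In either case the second condition holds for $C$.

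For the third condition, let $e$ be a cutedge of a component $C$ of $(\G\cup\B)(i)$ and let $C_1,C_2$ be the components of $C-e$. If $e\in\B(i)$, then each $C_j$ contains a complete $\G(i)$-component, which by the second $(r-1)$-admissibility condition harbours a vertex of degree at most $r-2$ in $\G(i)$, hence of degree at most $r-1$ in $(\G\cup\B)(i)$. If $e\in\G(i)$, let $D$ be the $\G(i)$-component containing $e$; since removing $e$ disconnects $C$, it must in particular disconnect $D$ (otherwise $D-e$ would still be connected and then so would $C-e$), so $e$ is a cutedge of $D$. The third $(r-1)$-admissibility condition then yields, in each of the two components $D_1,D_2$ of $D-e$, a vertex of degree at most $r-2$ in $\G(i)$. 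These two vertices have degree at most $r-1$ in $(\G\cup\B)(i)$, and the only point requiring a moment of care, which I see as the main (though modest) obstacle, is that they lie in \emph{different} components of $C-e$: this follows because the endpoints of $e$ separate $D$, so each connected subgraph $D_j$ is confined to one side of the cut, i.e.\ $D_1\subseteq C_1$ and $D_2\subseteq C_2$ (after relabeling). This finishes the verification.
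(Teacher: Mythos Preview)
Your proof is correct and follows essentially the same approach as the paper: both arguments exploit that each $\B(i)$ is a matching (so degrees increase by at most one) and then invoke the three $(r-1)$-admissibility conditions for $\G$. The only difference is presentational: the paper argues conditions two and three by contradiction (assuming a bad component or a bad side of a cutedge and deriving a violation of $(r-1)$-admissibility), whereas you give a direct case analysis (on the number $s$ of $\G(i)$-components inside $C$, and on whether the cutedge $e$ lies in $\B(i)$ or $\G(i)$). Your treatment of the case $e\in\G(i)$, showing that $e$ must already be a cutedge of its $\G(i)$-component $D$ and that the two pieces $D_1,D_2$ land in distinct sides $C_1,C_2$, is in fact slightly more explicit than the paper's.
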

 
 \begin{proof}
We must prove that $\C = \G \cup \B$ satisfies the conditions of Definition~\ref{def:admissible}. Note that $d_{\C(i)}(v) \leq r$ for each $v \in V(K_n)$ and $1 \leq i \leq k$. Note also that every component of every color class of $\C$ contains a vertex of degree at most $r  - 2$ or at least two vertices of degree $r - 1$. To see this, suppose for a contradiction that some component $C$ of some color class $\C(i)$ of $\C$ satisfies $\sum_{v \in V(C)} d_{\C(i)}(v) \geq |V(C)|r - 1$. Then there must be a component of $\G(i)$ that is a subgraph of $C$ which is either $(r - 1)$-regular or has all but one
vertex of degree $r - 1$ with the other of degree $r - 2$, contrary to our assumption that $\G$ is $(r-1)$-admissible. 
 
 Finally, suppose that $e$ is a cutedge of $C$, and let $C_1$ and $C_2$ be the components of $C - e$. We must show that $C_1$ and $C_2$ each have a vertex of degree at most $r - 1$ in $\C(i)$. Suppose for a contradiction that every vertex of $C_1$ or $C_2$, say $C_1$, has degree $r$ in $\C(i)$. Let $A_1$ denote the restriction of $\G(i)$ to $C_1$. Notice that each vertex of $A_1$ has degree precisely $r-1$ in $\G(i)$. Hence $A_1$ is connected and $e$ is a member of $\G(i)$ since otherwise $\G(i)$ contains a component whose vertices each have degree $r-1$, contradicting that $\G$ is $(r-1)$-admissible. It follows that $e$ is a cutedge of some component $A$ of $\G(i)$ such that $A - e$ contains $A_1$, again contradicting that $\G$ is $(r-1)$-admissible. 
 \end{proof}
 
We also need the following specialisation of Corollary 3 from \cite{bryant} due to Bryant. A decomposition $\G$ of a graph $G$ into $k$ colors is said to be \emph{almost regular} if $|d_{\G(i)}(x) - d_{\G(i)(y)}| \leq 1$ for every $x, y \in V(G)$ and $i = 1, \dots, k$.

 \begin{lemma}[\cite{bryant}]\label{lem:complete}\label{lem:bryant}
Let $n$, $\lambda$, $t$ be positive integers, and let $r_{i}$ be a non-negative integer for $i = 1, \dots, t$. Then there is an almost-regular decomposition of $\lambda K_n$ with $t$ colors $c_1,c_2,\dots,c_t$ such that each color $c_i$ has exactly $r_i$ edges if and only if $\sum_{j=1}^t r_j \leq \lambda \binom{n}{2}$.
 \end{lemma}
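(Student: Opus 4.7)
The necessity is immediate by edge-counting: the color classes are edge-disjoint subgraphs of $\lambda K_n$, so their sizes sum to at most $|E(\lambda K_n)|=\lambda\binom{n}{2}$.

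For sufficiency, my plan is to first reduce to the balanced case $\sum_{j=1}^t r_j=\lambda\binom{n}{2}$ by introducing a dummy color $c_0$ of size $r_0 := \lambda\binom{n}{2}-\sum_j r_j \geq 0$. Once one has a full almost-regular decomposition of $\lambda K_n$ into $t+1$ classes of the prescribed sizes $r_0,r_1,\dots,r_t$, dropping the dummy class $c_0$ yields the required partial decomposition.

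To build the full decomposition, I would proceed by induction on $t$. The inductive step requires extracting from $\lambda K_n$ an almost-regular subgraph $H$ with exactly $r_1$ edges, such that the remainder admits an almost-regular decomposition with the remaining prescribed sizes. To construct $H$, I would exploit the symmetry of $\lambda K_n$: the complete multigraph admits a decomposition into near-perfect matchings (exact $1$-factors when $n$ is even, near-$1$-factors of size $(n-1)/2$ when $n$ is odd) or into Hamiltonian cycles, depending on parity. One can then assemble $H$ from $\lfloor 2r_1/n \rfloor$ full regular pieces plus a well-chosen partial piece of size $r_1 \bmod \lfloor n/2 \rfloor$ (a short matching or a short path), yielding a subgraph whose vertex degrees lie in $\{\lfloor 2r_1/n\rfloor,\lceil 2r_1/n\rceil\}$ and hence differ by at most $1$.

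The main obstacle lies in making the induction go through, since the remainder $\lambda K_n \setminus H$ is not of the form $\lambda' K_{n'}$. To address this, I would strengthen the inductive hypothesis to allow multigraphs whose edge multiplicities $d(x,y)$ lie in $\{q,q+1\}$ for some integer $q\geq 0$ across all pairs $x,y$; such \emph{near-complete} multigraphs still admit near-factorizations into matchings or Hamiltonian-like structures, so the same extraction recipe applies. Verifying that the extraction of $H$ preserves near-completeness --- equivalently, that the residual multiplicities and vertex degrees stay balanced --- is the technical heart of the argument. As an alternative route that avoids a strengthened induction, I would attempt a direct detachment argument in the spirit of Theorem~\ref{thm:triad}, applied to a triad consisting of a single vertex carrying $\lambda\binom{n}{2}$ loops partitioned into $t+1$ color classes of the prescribed sizes; a fully expanded fair detachment automatically forces edge multiplicities $\lambda$ and color degrees $\approx 2r_i/n$, which is precisely the almost-regular condition, although one must first circumvent the ``good'' hypothesis by dealing separately with small color classes $r_i<n$ (for which an almost-regular subgraph can be built by hand and removed before the detachment step).
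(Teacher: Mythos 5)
This lemma is not proved in the paper at all: it is imported verbatim as a specialisation of Corollary~3 of Bryant's paper \cite{bryant}, so there is no internal proof to compare against. Judged on its own terms, your proposal establishes necessity correctly (edge counting) and the dummy-colour reduction to the balanced case is fine, but the sufficiency direction remains a plan rather than a proof. In your inductive route, the step you yourself flag as ``the technical heart'' is a genuine gap, not a formality: after extracting $H$ for colour $c_1$ from a matching (or Hamiltonian) decomposition, the leftover fragment of the partially used matching must be recombined with the next colour's partial piece, and a vertex hit by both fragments acquires degree $2$ more than a vertex hit by neither, so almost-regularity of the second class fails unless the partial pieces are chosen to interlock; making that choice work simultaneously for all $t$ classes and keeping the residual multigraph ``near-complete'' is precisely the nontrivial content of Bryant's result, and it is not carried out here.

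Your alternative detachment route is actually the closest in spirit to how such statements are really proved (Bryant's own argument is a detachment argument), but it cannot be run through Theorem~\ref{thm:triad} as stated in this paper: that theorem is a deliberately simplified version whose ``good'' hypothesis demands that every colour class of the amalgamated triad be $2$-connected with $d_{\G(i)}(v)\geq 2g(v)$, which for your single amalgamated vertex with $g(v)=n$ forces $r_i\geq n$ for every $i$ and, worse, would output $2$-edge-connected (hence connected) colour classes --- something an almost-regular class with $r_i<n-1$ edges cannot be. Handling the small classes ``by hand and removing them before the detachment step'' reintroduces exactly the balancing problem of the first route (the residual graph is no longer $\lambda' K_{n}$, and the remaining classes must be almost regular relative to it). What you would actually need is a detachment theorem without the connectivity/goodness hypotheses, i.e.\ essentially the result of \cite{bryant} itself. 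So the proposal identifies the right circle of ideas but does not close the argument.
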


\begin{proof}[Proof of Theorem \ref{thm:new}]
By Theorem \ref{thm:rw}, it suffices to construct a decomposition of $\mu K_{n}$ that encloses $\G$ that is $r$-admissible 
and contains at least $p$ edges per color class. 

From Lemma \ref{lem:bryant}, there is a decomposition $\F$ of $\mu K_n \setminus \lambda K_n$ into $k$ colors such that, for each $i = 1, \dots, k$, $|E(\F_i)| \in \{\lfloor \frac{\mu - \lambda}{k}\binom{n}{2} \rfloor, \lceil \frac{\mu - \lambda}{k}\binom{n}{2}\rceil \} $ and $|d_{\F(i)}(x) - d_{\F(i)}(y)| \leq 1$ for all $x, y \in V(K_n)$. We will show that for an appropriate choice of $C$, $\F$ has the required conditions. 

If we assume that $C < 2 - \frac{r(\mu - \lambda)}{\mu}$, then $k = \frac{\mu(m - 1)}{r} \geq \frac{\mu(2 - C)n}{r} \geq (\mu - \lambda)n$. This implies that $\frac{\mu - \lambda }{k}\binom{n}{2} \leq \frac{n - 1}{2}$ and hence that $\F$ is a proper edge coloring. By Lemma \ref{lem:pq}, $\G \cup \F$ is $r$-admissible so, since $p$ is an integer, it remains to show that $p \leq \frac{\mu - \lambda}{k}\binom{n}{2}$. Note that we can assume that $m \leq 2n - 1$ since $p \leq 0$ whenever $m \geq 2n$. Now, if we further assume $C \leq \frac{\mu - \lambda}{2\mu}$, we have the required result because 
\[
p = \frac{r(2n - m)}{2} \leq \frac{Cnr}{2} \leq \frac{(\mu - \lambda)nr}{4\mu} = \frac{(\mu - \lambda)n(m - 1)}{4k} \leq \frac{(\mu - \lambda)}{k} \binom{n}{2}
\]
So by taking $C \leq \min \{\frac{\mu - \lambda}{2\mu},  2 - \frac{r(\mu - \lambda)}{\mu}\}$, the theorem follows. 	  
\end{proof}

\section{Concluding remarks}

 Feghali and Johnson~\cite{feghali} gave an example of a decomposition of $5 K_4$ that satisfies conditions (C1)--(C4) with $r = 2$ but that cannot be enclosed in some Hamiltonian decomposition of $6 K_6$. However, we think that if $n$ sufficiently large and $m \geq 2n - 2$,  then (C1)--(C4) are likely to always be sufficient conditions.  (Observe that (C3) and (C4) always hold whenever $n$ is large compared to $r$.)

\begin{conjecture}
Let $n$ and $m$ be positive integers such that $m = 2n - 2$ and $r\geq 2$. Suppose that $r$, $\mu$, and $\lambda$ are positive integers that do not depend on $n$ such that $\mu > \lambda$. Then a given decomposition $\G$ of $\lambda K_n$ into $k$ colors can be enclosed in some $2$-edge-connected $r$-factorization of $\mu K_m$ if and only if $rk = \mu(m - 1)$, $rm$ is even and $\G$ is $r$-admissible, provided $n$ is sufficiently large.  
\end{conjecture}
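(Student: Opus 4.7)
The plan is to follow the strategy of Theorem~\ref{thm:rfactor2}, noting that for constant $r,\mu,\lambda$ and $n$ large, (a) conditions (C3) and (C4) hold automatically, and (b) the only remaining hypothesis that binds is $\mu\leq 2(r-1)$. For (a), the LHS of (C3) is bounded by $rk=\mu(m-1)=O(n)$ while the RHS is $(\mu-\lambda)n(n-1)/2=\Theta(n^2)$; similarly the LHS of (C4) is at most $|\S_0(\G)|+\lambda\leq k+\lambda=O(n)$ (each $uv$-edge of $\lambda K_n$ lies in exactly one color class, so at most $\lambda$ classes contain a $uv$-edge), while its RHS is again $\Theta(n^2)$. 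Hence both hold for $n\geq N_0(r,\mu,\lambda)$, and Theorem~\ref{thm:rfactor2} already proves the conjecture whenever $\mu\leq 2(r-1)$.

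For $\mu>2(r-1)$, I would keep the two-step template: first apply Lemma~\ref{lem:rextendible} (which does not need $\mu\leq 2(r-1)$) to obtain an $r$-extendible partial decomposition $\G'$ of $\mu K_n$ enclosing $\G$, then extend $\G'$ edge-by-edge. The only step requiring new work is reproving Lemma~\ref{lem:2m-2} without the restriction on~$\mu$. Letting $J$ denote the set of colors $i$ with $E_{xy}^i$ consisting of exactly $r-1$ parallel $xy$-edges, the paper's argument required $|J|\leq 1$; in general one has $|J|\leq(\mu-1)/(r-1)=O_{\mu,r}(1)$. Crucially, a color $c\in J$ contains no $xu$-edge for $u\neq y$ (all its $x,y$-incident edges are $xy$-edges), so the intermediate color of any swap edge $f=xu$ automatically lies outside $J$. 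A direct summation using $\sum_i|E_{xy}^i|\leq rk-1$ yields $\sum_{c\notin J}(|E_{xy}^c|-r)\leq|J|-1$, hence at most $O_{\mu,r}(1)$ ``heavy'' colors $c$ satisfy $|E_{xy}^c|>r$. I then pick $j\in J$ and $u\in Q_2$ so that some available $xu$-edge $f$ is either uncolored or has a ``light'' color $c$ with $|E_{xy}^c|=r$ exactly, at which point the swap argument of Lemma~\ref{lem:2m-2} goes through verbatim.

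The main obstacle is to guarantee that such a good $(u,f)$ exists in every configuration. Of the $\mu(n-2)$ $xu$-edges emanating from $x$, at most $O_{\mu,r}(1)$ can be colored heavily (each heavy color contributes at most $r$ edges at $x$), so most are either uncolored or light; however, $u$ must lie in a component $Q_2$ of some $\G'(j)$ with $j\in J$ and satisfy the degree conditions of Definition~\ref{def:admissible}, and if every such $Q_2$ is of constant size an adversary might concentrate all heavy colors on the few legal $xu$-edges. To circumvent this, I would argue using ``$n$ large'' that either some $\G'(j)$ must contain a component $Q_2$ of size $\Omega(n)$ (so a light $xu$-edge is abundantly available), or---by iterating the choice of $j$ across $J$ and, if necessary, performing a bounded-length chain of recolorings---one can always dodge the $O_{\mu,r}(1)$ obstructions. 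Making this flexibility argument robust in all adversarial cases is, I believe, the crucial technical step, and is where a new idea beyond the paper's current arguments will be needed.
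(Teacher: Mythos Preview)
The statement you are attempting to prove is stated in the paper as a \emph{conjecture}, not a theorem: the paper offers no proof of it. There is therefore nothing to compare your argument against.

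That said, your first paragraph is entirely correct and worth recording: when $\mu\leq 2(r-1)$, conditions (C3) and (C4) are automatically satisfied for $n$ large (your counting is right: the left-hand sides are $O(n)$ while the right-hand sides are $\Theta(n^2)$), so Theorem~\ref{thm:rfactor2} already yields the conjecture in that range. The paper implicitly acknowledges this in the sentence preceding the conjecture.

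For $\mu>2(r-1)$ your diagnosis of the obstacle is accurate. The proof of Lemma~\ref{lem:2m-2} uses $\mu\leq 2(r-1)$ precisely to force $|J|\leq 1$; your bound $|J|\leq(\mu-1)/(r-1)$ and the summation $\sum_{c\notin J}(|E_{xy}^c|-r)\leq|J|-1$ are both correct. But the final step---guaranteeing a good $(u,f)$ for the swap---is genuinely incomplete, as you yourself concede. The difficulty is real: the vertex $u$ must lie in a specific component $Q_2$ of $\G'(j)$ for some $j\in J$, and nothing in the hypotheses prevents every such $Q_2$ from having bounded size, leaving only $O_{\mu,r}(1)$ candidate edges $f$, any of which might carry a heavy color. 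Your suggestion of a bounded-length recoloring chain is plausible but unproved; in particular you would need to show that such a chain terminates without revisiting colors in $J$ or creating new admissibility violations elsewhere. Until that is made precise, the case $\mu>2(r-1)$ remains open, which is exactly why the paper states this as a conjecture rather than a theorem.
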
  

 In the situation where $m < 2n - 2$, Theorem~\ref{thm:new} provides extensive solutions under the assumptions that the given decomposition is $(r-1)$-admissible and $2 \mu > r(\mu - \lambda)$. Theorem~\ref{thm:new} depends on more conditions than our first two theorems (but covers significantly more values of $m$) so we only state the following rather general problem and do not attempt to give a conjecture. 
 
 \begin{problem}
 Let $r$, $\mu$, $\lambda$, $n$ and $m$ be positive integers such that $\mu > \lambda$ and $m \geq (2 - C)n$. Is it true that we may choose $C = C( \lambda, \mu, r)$ so that a decomposition $\G$ of $\lambda K_n$ into $k$ colors can always be enclosed in some $2$-edge-connected $r$-factorization of $\mu K_m$ whenever $rk = \mu(m - 1)$, $rm$ is even and $\G$ is $r$-admissible, provided $n$ is sufficiently large? 
 \end{problem} 
 
 \section*{Acknowledgments} 
The authors are indebted to both referees for several helpful suggestions and to one referee for supplying the current proof of Theorem \ref{thm:new}.   This paper received partial support by Fondation Sciences Math\'ematiques de Paris and the Research Council of Norway via the project CLASSIS, Grant Number 249994.

 \bibliography{bibliography}{}
\bibliographystyle{abbrv}
 
\end{document}